\DeclareMathAlphabet{\mathbbold}{U}{bbold}{m}{n}
\newtheorem{theorem}{Theorem}[section]
\newtheorem{proposition}[theorem]{Proposition}
\newtheorem{lemma}[theorem]{Lemma}
\theoremstyle{definition}
\newtheorem{definition}{Definition}[section]
\newtheorem{remark}[theorem]{Remark}
\newtheorem{question}{Question}
\newcommand{\cst}{\ensuremath{\mathrm{C}^\ast}}
\newcommand{\id}{\mathrm{id}}
\newcommand{\I}{\mathds{1}}
\newcommand{\is}[2]{{\left\langle{#1}\,\vline\,#2\right\rangle}}
\newcommand{\ket}[1]{{\left|#1\right\rangle}}
\newcommand{\bra}[1]{{\left\langle#1\right|}}
\newcommand{\CC}{\mathbb{C}}
\newcommand{\GG}{\mathbb{G}}
\DeclareMathOperator{\C}{C}
\DeclareMathOperator{\Mat}{\mathsf{Mat}}
\title{Quantum Mycielski Graphs}
\author{Arkadiusz Bochniak}
\address{Max Planck Institut f\"ur Quantenoptik, Hans-Kopfermann-Straße 1, 85748 Garching, Germany}
\address{Munich Center for Quantum Science and Technology, Schellingstraße 4, 80799 M{\"u}nchen, Germany}
\email{arkadiusz.bochniak@mpq.mpg.de}
\author{Pawe{\l} Kasprzak}
\address{Department of Mathematical Methods in Physics, Faculty of Physics, University of Warsaw, Poland}
\email{pawel.kasprzak@fuw.edu.pl}
\keywords{Quantum graphs, Chromatic numbers, Mycielski transformation}
\subjclass[2020]{Primary: 46L05, Secondary: 81P45}
\begin{document}
    \maketitle
\begin{abstract}
 {  The classical Mycielski transformation allows for constructing from a given graph the new one, with an arbitrarily large chromatic number but preserving the size of the largest clique contained in it. This particular construction and its specific generalizations were widely discussed in graph theory literature. Here we propose an analog of these transformations for quantum graphs and study how they affect the (quantum) chromatic number as well as clique numbers associated with them.}
\end{abstract}
\section{Introduction}

Properties and characteristics of classical graph colouring were widely studied from different perspectives, starting from their fundamental aspects, through applications in several branches of science and engineering, as well as in multiple daily life problems \cite{book}. The typical parameters one uses to characterize a given graph $G$ are related to a number of colors that can be used to label either its vertices or edges according to a certain set of rules. The most common characteristic is called the chromatic number $\chi(G)$ and is defined as the minimal number of colors that could be used to label the vertices of $G$ in such a way that none of the edges has the same colors associated to its two endpoints. The problem of determining this quantity for a generic graph is known to be NP-hard \cite{Karp1972}. Yet another NP-hard parameter  containing information about the structure of a given graph $G$ is its clique number $\omega(G)$ which determines the largest subgraph $K_n$ of $G$ whose every vertex shares an edge with any other vertex of this subgraph, the complete graph of $n$ vertices. In particular, if $\omega(G)=2$, then the graph is triangle-free (i.e. there is no closed loop formed out of three vertices). One could ask if starting from a triangle-free connected graph we can arbitrarily enlarge the chromatic number by adding a certain number of vertices and edges but at the same time, no triangle is generated as a subgraph and the resulting graph remains connected. The affirmative answer to this question was given by Mycielski \cite{Mycielski}. The resulting transformation that from a given graph $G$ produces a new graph $\mu(G)$ such that $G\subseteq \mu(G)$, $\omega(G)=\omega(\mu(G))$ and $\chi(\mu(G))=\chi(G)+1$ is referred to as the Mycielski transformation. This construction was later generalized in \cite{Stiebitz,VanNgoc}, and for it, the notion of generalized Mycielski transformation, Stiebitz transformation, or (higher) cones over a graph is used.

One area of intriguing applications of graph theory is the information theory \cite{Duan_2013,Stahlke}, where for a given noisy classical channel $\Phi_{\mathrm{cl}}: A\rightarrow \textrm{Prob}(B)
$ between two persons, Alice and Bob, one can associate the so-called confusability graph, defined as a complement of the distinguishability graph whose vertices are elements of the set $A$, and there exists an edge between two vertices $a_1,a_2\in A$ if and only if for all $b\in B$ we have $\Phi_{\mathrm{cl}}(b|a_1)\Phi_{\mathrm{cl}}(b|a_2)=0$, where $\Phi_{\mathrm{cl}}(b|a)$ stands for the conditional probability that Bob will receive $b\in B$ provided that Alice had sent $a\in A$. The fact that there is a connection between classical channels and graphs led to an intriguing possibility of associating to a quantum channel $\Phi_{\mathrm{q}}$ an object that will mimic the behavior of classical graphs - the quantum graph. This can be done either by directly using the associated Kraus operators defining the completely positive trace-preserving map $\Phi_{\mathrm{q}}$, or use them to construct certain operator systems (spaces) and study their properties. The concept of quantum graphs appeared already before in \cite{Erdos}, however, the relation to quantum information, in particular the problem of zero-error correction, significantly accelerated the interest in quantum graphs. Several formulations, which under some conditions turned out to be equivalent, were proposed. For a neat survey, we refer the reader to \cite{DawsQGT}. The two most widely used are:
\begin{enumerate}
    \item[I.] approach based on quantum relations and their formulations in terms of operator systems (spaces) \cite{weaver10,NW2015},
    \item[II.] formulation by mimicking in the quantum world the properties of adjacency matrices \cite{MRV2018,Brannan_2019}. 
\end{enumerate}

Let us again consider the aforementioned noisy classical channel $\Phi_{\mathrm{cl}}$, and now promote Alice and Bob to be players for the following game \cite{Stahlke}. Introducing an external referee, say, Charlie, one can consider the scenario in which Charlie, chooses a value $c$ from its own set $C$ and, with a conditional probability $P(x,u|c)$, sends a message $x\in X$ to Alice, and a value $u\in B$ to Bob. If Alice is equipped with a map $f: X\rightarrow A$, called strategy, she can use it to produce $a=f(x)$ and send this value to Bob using the channel $\Phi_{\mathrm{cl}}$ so that he will receive a value $b\in B$ with probability $\Phi_{\mathrm{cl}}(b|a)$. The goal of the players is to decode the initial value $c\in C$. With a set $X$ one can associate the so-called characteristic graph { defined in terms of $P$}, and it turns out \cite{Stahlke} that the existence of this type of scenario with a winning strategy is equivalent to the existence of a homomorphism between such characteristic graph and the distinguishability one associated with the channel $\Phi_{\mathrm{cl}}$. This observation suggests that certain characteristics of graphs could be potentially defined in terms of the existence of a winning strategy for certain types of games. In particular, this is true for the chromatic number. One can consider a two-player game in which Charlie chooses two vertices, say $v$ and $w$, of a given graph $G$ and sends one of them to Alice and the second one to Bob. They have to respond with numbers $\alpha,\beta$ from a fixed set $\{1,\ldots, c\}$ according to the following rules:
\begin{itemize}
    \item $v=w \Rightarrow \alpha=\beta$,
    \item $vw$ is an edge $\Rightarrow$ $\alpha\neq \beta$.
\end{itemize}
The lack of communication between Alice and Bob is assumed during the game, but they can agree on a strategy before the game started. It turns out that such a winning strategy exists if and only if $c\geq \chi(G)$. This motivated the concept of quantum chromatic number introduced in \cite{QchNb} where the players are no longer assumed to be classical but they are allowed to share some entangled quantum state $\Psi$ and, instead of providing answers, they are performing quantum measurements. More precisely, the players' strategy is mathematically modeled  by positive operator-valued measures (POVMs) $(E_{v\alpha})_{\alpha=1,\ldots, c}$ and $(F_{v\alpha})_{\alpha=1,\ldots, c}$, respectively, and it is a winning strategy if 
\begin{itemize}
    \item $\forall v\in V(G)\,  \forall \alpha\neq \beta \, \langle \Psi | E_{v\alpha}\otimes F_{v\beta}|\Psi \rangle =0$,
    \item $\forall vw\in E(G)$ $\, \forall\alpha \,\langle \Psi | E_{v\alpha}\otimes F_{w\alpha}|\Psi \rangle =0 $. 
\end{itemize}
The quantum chromatic number is defined, by analogy to its classical counterpart, as the minimal number for which there exists a winning strategy for this non-local game.

Both classical and quantum chromatic numbers, together with the notion of (quantum) coloring, can be also defined in the framework of quantum graphs \cite{BGH,Ganesan21}. It makes then sense to ask about analogs of Mycielski theorem in the quantum setting. In this paper, we propose such a generalization of the Mycielski transformation for quantum graphs and study how it affects parameters like (quantum) chromatic numbers as well as (several versions of) clique numbers. In Section~\ref{sec:Mycielski} we describe in detail the Mycielski transformation for classical graphs and discuss its generalization into cones over a graph. In Section~\ref{sec:quantum_graph} we briefly recall the notion of a quantum graph and establish the notation widely used in the rest of the paper. We introduce the (generalized) Mycielski transformation for quantum graphs in Section~\ref{sec:qMycielski} and show that indeed the resulting object is a well-defined quantum graph. Next, we study in Section~\ref{sec:chromatic} how this transformation affects the (quantum) chromatic number. The impact on the (different versions of) clique numbers is discussed in Section~\ref{sec:clique}. {Finally, in Section~\ref{sec:open} we briefly comment on other graph parameters and collect a series of open questions as well as potential future research directions.}

\section{Mycielski transformation for classical graphs}
\label{sec:Mycielski}
Let $G$ be an undirected graph with a given set of vertices $V(G)$ and the set of edges $E(G)$. The Mycielski transformation \cite{Mycielski} of $G$, denoted by $\mu(G)$, is the graph with the set of vertices $V(\mu(G))=\{\bullet\}\sqcup V(G)\sqcup V(G)$ s.t. every vertex from the second copy of $V(G)$ is connected with the distinguished vertex $\bullet$, and if $\{v_i,v_j\}\in E(G)$ and $u_i$ denotes the copy of $v_i$ is the second summand, then both $\{v_i,u_j\}$ and $\{v_j,u_i\}$ are edges in $\mu(G)$. In other words, the adjacency matrix $A_{\mu(G)}$ is of the form
\begin{equation}
    A_{\mu(G)}=\begin{pmatrix}
        0& \vec{0}^T & \vec{1}^T \\
        \vec{0} & A_G  & A_G \\
        \vec{1} & A_G  &\mathbbold{0}
    \end{pmatrix}.
\end{equation}
The Mycielski transformation $\mu(G)$ is a special case of so-called generalized Mycielski transformation $\mu_{r}(G)$ (or $r$-Mycielskian) for $r=1$, also known as cones over a graph $G$, \cite{Stiebitz,VanNgoc}. If the vertex set of $G$ is $V^0=\{v_1^0,v_2^0,\ldots,v_n^0\}$ and the edge set of $G$ is denoted by $E_0$ then the vertex set of $r$-Mycielskian is 
\[V^0\sqcup V^1\sqcup\ldots \sqcup V^r\sqcup\{\bullet\}\]
where $V^i =\{v_1^i,v_2^i,\ldots,v_n^i\}$ is the distinct copy of $V^0$ and the edge set is given by 
\[E = E_0\cup\bigcup_{i=0}^{r-1}\{v^i_jv^{i+1}_{j'}:v^0_jv^{0}_{j'}\in E_0\}\cup\{v^r_1\bullet,v^r_2\bullet,\ldots,v^r_n\bullet\}.\]

We illustrate this construction in a particular example. Let $G=K_2$, i.e. 
\begin{center}
\begin{tikzpicture}  
  [scale=1,auto=center,every node/.style={circle,fill=blue!20}]
    
  \node (a1) at (0,0) {$v_1^0$};  
  \node (a2) at (2,0) {$v_2^0$};  
  
  \draw (a1) -- (a2); 
  
\end{tikzpicture}  
\end{center}
The Mycielski transformation $\mu_1(G)$ of this graph is 
\begin{center}
\begin{tikzpicture}  
  [scale=1,auto=center,every node/.style={circle,fill=blue!20}]
    
  \node (a1) at (0,0) {$v_1^0$};  
  \node (a2) at (2,0) {$v_2^0$};
  \node (a3) at (0,-2) {$v_1^1$};
  \node (a4) at (2,-2) {$v_2^1$};
  \node (a5) at (1,-4) {$\bullet$};
  
  \draw (a1) -- (a2); 
  \draw (a1) -- (a4);
  \draw (a2) -- (a3);
  \draw (a3) -- (a5);
  \draw (a4) -- (a5);
  
\end{tikzpicture}  
\end{center}

Notice that this graph is simply
\begin{center}
\begin{tikzpicture}  

  \node[
      regular polygon,
      regular polygon sides=5,
      minimum width=40mm,
      ] 
      (PG) {}
      [scale=1,auto=center,every node/.style={circle,fill=blue!20}]
      (PG.corner 1) node (PG1) {$v_2^0$}
      (PG.corner 2) node (PG2) {$v_1^0$}
      (PG.corner 3) node (PG3) {$v_2^1$}
      (PG.corner 4) node (PG4) {$\bullet$}
      (PG.corner 5) node (PG5) {$v_1^1$}
    ;
    \foreach \S/\E in {
      1/2, 2/3, 3/4, 4/5, 5/1} 
      {
      \draw[-] (PG\S) -- (PG\E);
      }
\end{tikzpicture}  
\end{center}

Next, we observe that $\mu_2(G)$ is of the form
\begin{center}
\begin{tikzpicture}  
  [scale=1,auto=center,every node/.style={circle,fill=blue!20}]
    
  \node (a1) at (0,0) {$v_1^0$};  
  \node (a2) at (2,0) {$v_2^0$};
  \node (a3) at (0,-2) {$v_1^1$};
  \node (a4) at (2,-2) {$v_2^1$};
  \node (a5) at (0,-4) {$v_1^2$};
  \node (a6) at (2,-4) {$v_2^2$};
  \node (a7) at (1,-6) {$\bullet$};
  
  \draw (a1) -- (a2); 
  \draw (a1) -- (a4);
  \draw (a2) -- (a3);
  \draw (a3) -- (a6);
  \draw (a4) -- (a5);
  \draw (a5) -- (a7);
  \draw (a6) -- (a7);
\end{tikzpicture}  
\end{center}
We also notice that $\mu_1(\mu_1(G))\neq \mu_2(G)$.

\section{Quantum graphs}\label{sec:quantum_graph}

Let us recall here the definition of a quantum graph \cite[Definition 2.4]{DawsQGT}. 

\begin{definition}
\label{def:QG}
A triple $\mathcal{G}=(\mathbb{G},\psi, A)$ is a quantum graph, if
\begin{itemize}
    \item $\mathbb{G}$ is a finite quantum space; the corresponding (finite-dimensional) $\cst$-algebra will be denoted by $\C(\mathcal{G})$;
    \item  $\psi:\C(\mathcal{G})\rightarrow \mathbb{C}$  is a faithful state; the GNS space will be denoted by $L^2(\mathcal{G})$; the  multiplication map  when viewed as a linear map $L^2(\mathcal{G})\otimes L^2(\mathcal{G})\to L^2(\mathcal{G})$ will be denoted by $m$; since $\GG$ is finite and $\psi$ is faithful, $\C(\mathcal{G})$ and $L^2(\mathcal{G})$ can be identified as vector spaces; the map $ \mathbb{C}\ni \lambda \longmapsto \lambda\I_{\C(\mathcal{G})} \in L^2(\mathcal{G})$ will be denoted by $\eta$; note incidentally that 
    $\eta^\ast(x) = \psi(x)$ for all $x\in L^2(\mathcal{G})$;
    \item $\psi$ is a $\delta$-form, i.e.  
 $mm^\ast=\delta^2\id_{L^2(\mathcal{G})}$;
    \item $A$, called quantum adjacency matrix, is a self-adjoint map $A:L^2(\mathcal{G})\to L^2(\mathcal{G})$ s.t.
    \begin{align}\label{eq1:defA}  A&= \delta^{-2}m(A\otimes A)m^\ast \\\label{eq2:defA}
         A&=(\id\otimes \eta^\ast m)(\I\otimes A\otimes \I)(m^\ast \eta \otimes\id)  
    \end{align}
    
\end{itemize}
If, moreover, $m(A\otimes \I)m^\ast=\delta^2 \I$, then the quantum graph  $\mathcal{G}$ is called reflexive. On the other hand, if $m(A\otimes \I)m^\ast=0$ then the quantum graph is called irreflexive.
\end{definition}

We will often deal with a multiple of quantum graphs and, to avoid confusion, subscripts indicating the corresponding quantum graph will be added to the state, adjacency matrix, multiplication map, etc.~and e.g.~we shall write 
$\mathcal{G} = (\mathbb{G}_{\mathcal{G}},\psi_{\mathcal{G}}, A_{\mathcal{G}})$; $\dim\C(\mathcal{G})$ will be denoted by $|\mathcal{G}|$. 

Notice that
\begin{lemma}
\label{lem:eta}
    For a quantum graph $\mathcal{G}=(\mathbb{G},\psi, A)$ we have $(\id \otimes \eta^\ast)m^\ast=\id =(\eta^\ast \otimes\id )m^\ast $. 
\end{lemma}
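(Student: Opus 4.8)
The plan is to recognise this statement as the counit law dual to the unit axioms for the algebra structure on $L^2(\mathcal{G})$, and to derive it simply by taking Hilbert-space adjoints; in particular neither the $\delta$-form condition $mm^\ast=\delta^2\id$ nor the defining relations \eqref{eq1:defA}--\eqref{eq2:defA} of $A$ will be used, only the unitality of the multiplication.

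The single structural input I would isolate first is that, under the vector-space identification $\C(\mathcal{G})\cong L^2(\mathcal{G})$ of Definition~\ref{def:QG}, the map $\eta$ sends $1\in\CC$ to the algebra unit $\I$, and consequently $m(x\otimes\I)=x=m(\I\otimes x)$ for every $x\in L^2(\mathcal{G})$; equivalently, after the canonical identifications $L^2(\mathcal{G})\otimes\CC\cong L^2(\mathcal{G})\cong\CC\otimes L^2(\mathcal{G})$, one has $m(\id\otimes\eta)=\id=m(\eta\otimes\id)$.

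Taking the Hilbert-space adjoint of $m(\id\otimes\eta)=\id$ and using $(\id\otimes\eta)^\ast=\id\otimes\eta^\ast$ gives $(\id\otimes\eta^\ast)m^\ast=\id$, and likewise the adjoint of $m(\eta\otimes\id)=\id$ gives $(\eta^\ast\otimes\id)m^\ast=\id$. If one prefers a hands-on verification, the same two identities follow by pairing against arbitrary vectors: for $x,y\in L^2(\mathcal{G})$,
\[
\is{(\id\otimes\eta^\ast)m^\ast(x)}{y}=\is{m^\ast(x)}{y\otimes\I}=\is{x}{m(y\otimes\I)}=\is{x}{y},
\]
where the first equality uses $(\eta^\ast)^\ast=\eta$ with $\eta(1)=\I$, the second is the definition of $m^\ast$, and the last is the unitality recorded above; since $\psi$ is faithful the inner product on $L^2(\mathcal{G})$ is nondegenerate, so $(\id\otimes\eta^\ast)m^\ast(x)=x$, and the symmetric computation with $m(\I\otimes y)=y$ handles the other leg.

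I do not expect any real obstacle: the argument is a two-line adjunction. The only points requiring care are purely clerical — keeping track of the canonical isomorphisms $\CC\otimes H\cong H\cong H\otimes\CC$ so that $\id\otimes\eta^\ast$ is applied in the correct slot, and recalling from Definition~\ref{def:QG} that $\eta(\lambda)=\lambda\I$ and $\eta^\ast(x)=\psi(x)$ if one wants to write things through the state $\psi$ rather than through $\eta$.
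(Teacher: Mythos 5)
Your argument is correct and is exactly the paper's proof: conjugate (take adjoints of) the unit identities $m(\eta\otimes\id)=\id=m(\id\otimes\eta)$, which express $\I a = a = a\I$. The hands-on pairing computation you add is just an unpacking of the same adjunction, so nothing differs in substance.
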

\begin{proof}
 The lemma is proved by conjugating the identities  
 \[m(\eta\otimes\id) = \id = m(\id\otimes\eta),\] where the latter expresses the identity $a\I = a = \I a$ for all $a\in \C(\mathcal{G})$. 
\end{proof}
In what follows we shall  use Swedler  notation for $m^\ast :L^2(\mathcal{G})\longmapsto L^2(\mathcal{G})\otimes L^2(\mathcal{G})$ and write 
\[m^\ast (a) = a_{(1)}\otimes a_{(2)}.\]

The $\cst$-algebra $\C(\mathcal{G})$ will be viewed as a $\C^\ast$-algebra of operators acting on $L^2(\mathcal{G})$ and $\C(\mathcal{G})'$ will denote its commutant 
\[\C(\mathcal{G})' = \{T\in B(L^2(\mathcal{G})):Ta = aT \text{ for all } a\in\C(\mathcal{G})\}.\]
The map 
\begin{equation}
    P:B(L^2(\mathcal{G}))\ni X \longmapsto \delta^{-2}m(A\otimes X)m^\ast\in B(L^2(\mathcal{G}))
\end{equation} is a projection satisfying 
\[P(XT)=P(X)T,\quad P(TX) = TP(X),\quad P(X^\ast) = P(X)^\ast\] for all $X\in B(L^2(\mathcal{G}))$. In particular  the image $S$ of $P$ 
\begin{equation}\label{eq:opsys}\
    S = P(B(L^2(\mathcal{G})))   
\end{equation} is a selfadjoint operator subspace in $B(L^2(\mathcal{G}))$ which is also a bimodule over $\C(\mathcal{G})'$. Since $\C(\mathcal{G})$ is finite-dimensional, it is also a von Neumann algebra thus  the triple $(S,\C(\mathcal{G}),B(L^2(\mathcal{G})))$  is a quantum graph in the sense of Weaver \cite{weaver10,NW2015}. We shall usually write $S_{\mathcal{G}}$ for  the selfadjoint subspace $S\subset  B(L^2(\mathcal{G}))$ described above. Note that if $\mathcal{G}$ is reflexive then $S_{\mathcal{G}}$ is an operator space. The operator space $S_{G}$ for a classical graph $G$ is spanned by the matrix units $e_{ij}\in\Mat_{n}$ corresponding to edges $(ij)\in E(G)$, see Section~\ref{sec:clique}.

\section{Mycielski transformation for quantum graphs}
Prior to the definition of the (generalized) Mycielski transformation we need to fix a notation: given linear maps $T_i: V_i\to W$ for $i=1,2,\ldots,r$  the unique linear map $T:\bigoplus\limits_{i=1}^r V
_i\to W$ such that $T|_{V_i} = T_i$ for $i=1,2,\ldots,r$, will be denoted by $ \bigoplus\limits_{i=1}^r T_i$. 
\label{sec:qMycielski}
\begin{definition}
\label{def:MT}
The Mycielski transformation $\mu(\mathcal{G})$ of a quantum graph $\mathcal{G}=(\mathbb{G}_{\mathcal{G}},\psi_{\mathcal{G}},A_{\mathcal{G}})$ is a triple $(\mathbb{G}_{\mu(\mathcal{G})},\psi_{\mu(\mathcal{G})},A_{\mu(\mathcal{G})})$ where 
    \begin{itemize}
        \item $\mathbb{G}_{\mu(\mathcal{G})}=\bullet \sqcup \mathbb{G}_{\mathcal{G}}\sqcup \mathbb{G}_{\mathcal{G}}$ (i.e. the corresponding $\C^\ast$-algebra  is $\mathbb{C}\oplus \C(\mathcal{G}) \oplus \C(\mathcal{G}) $),
        \item $\psi_{\mu(\mathcal{G})}=\frac{1}{1+2\delta^2 }\left(\id \oplus\delta^2\psi_{\mathcal{G}}\oplus \delta^2 \psi_{\mathcal{G}}\right)$,
        \item $A_{\mu(\mathcal{G})}:L^2(\mu(\mathcal{G}))\to L^2(\mu(\mathcal{G}))$ is defined by 
    \begin{equation}\label{Amu} A_{\mu(\mathcal{G})}\begin{pmatrix}\lambda\\x\\y\end{pmatrix}=\begin{pmatrix}\delta^2\psi_\mathcal{G}(y)\\A_\mathcal{G}(x+y)\\\lambda\I_{\C(\mathcal{G})}+A_\mathcal{G}(x) \end{pmatrix} \end{equation} for all $\lambda\in\mathbb{C}$ and $x,y\in L^2(\mathcal{G})$. Note that we identify   $L^2(\mu(\mathcal{G}))$ with $\mathbb{C}\oplus L^2( \mathcal{G} ) \oplus L^2( \mathcal{G} )$, with the scalar product on the latter given by
    \[\is{\begin{pmatrix}
\lambda\\x\\y
\end{pmatrix}}{\begin{pmatrix}
\mu\\u\\v
\end{pmatrix}} = \frac{1}{2\delta^2+1}\left(\overline \lambda {\mu}+\delta^2\is{x}{u}+\delta^2\is{y}{v}\right).\]
\end{itemize}
\end{definition}

Instead of showing that this construction indeed defines a quantum graph, we first introduce the quantum analog of the generalized Mycielski graphs. We will show that the generalized Mycielski transformation for quantum graphs produces new quantum graphs. In particular, it will follow  that the Mycielski transformation as defined in Definition~\ref{def:MT} also has this property.

\begin{definition}\label{rmyciel}
    Let $\mathcal{G}=(\mathbb{G}_{\mathcal{G}},\psi_{\mathcal{G}},A_{\mathcal{G}})$ be a quantum graph   and let $r\geq 1$. The $r-1$-Mycielski transformation $\mu_{r-1}(\mathcal{G})$ of $\mathcal{G}$ is a triple $(\mathbb{G}_{\mu_{r-1}(\mathcal{G})},\psi_{\mu_{r-1}(\mathcal{G})},A_{\mu_{r-1}(\mathcal{G})})$ where 
    \begin{itemize}
        \item $\mathbb{G}_{\mu_{r-1}(\mathcal{G})}=\bullet \sqcup \underbrace{\mathbb{G}_{\mathcal{G}}\sqcup \ldots \sqcup \mathbb{G}_{\mathcal{G}}}_{r \text{ times}}$ (i.e. the   $\C^\ast$-algebra of $\mu_{r-1}(\mathcal{G})$  is $\mathbb{C}\oplus \C(\mathcal{G})^{\oplus r}$),
        \item $\psi_{\mu_{r-1}(\mathcal{G})}=\frac{1}{1+r\delta^2}\left(\id \oplus\delta^2\psi_{\mathcal{G}^{\oplus r}}\right)$,
        \item $A_{\mu_{r-1}(\mathcal{G})}:L^2(\mu_{r-1}(\mathcal{G}))\to L^2(\mu_{r-1}(\mathcal{G}))$ is defined by 
    \begin{equation}\label{Amur} A_{\mu_{r-1}(\mathcal{G})}\begin{pmatrix}\lambda\\x_1\\ \vdots \\ x_r \end{pmatrix}=\begin{pmatrix}\delta^2\psi_\mathcal{G}(x_r)\\A_\mathcal{G}(x_1+x_2)\\ A_{\mathcal{G}}(x_1+x_3)\\ \vdots \\ A_\mathcal{G}(x_k+x_{k+2})\\ \vdots \\ A_\mathcal{G}(x_{r-2}+x_r)\\ \lambda\I_\GG+A_\mathcal{G}(x_{r-1}) \end{pmatrix} \end{equation} for all $\lambda\in\mathbb{C}$ and $x_1,\ldots, x_r\in L^2(\mathcal{G})$. Here $L^2(\mu_{r-1}(\mathcal{G}))$ is identified with $\CC \oplus \bigoplus\limits_{k=1}^r L^2(\mathcal{G})$.
    \end{itemize}
\end{definition}
Let us right away observe that $\mu_1(\mathcal{G})$ as described in Definition \ref{rmyciel} coincides with Mycielskian $\mu(\mathcal{G})$ of $\mathcal{G}$ as defined in Definition \ref{def:MT}.
Note also that for every $r\geq 1$,  the scalar product on $L^2(\mu_{r-1}(\mathcal{G}))$ is given by
    \[
    \is{\begin{pmatrix}
\lambda\\x_1\\ \vdots \\ x_r
\end{pmatrix}}{\begin{pmatrix}
\mu\\y_1\\\vdots\\ y_r
\end{pmatrix}} = \frac{1}{1+r\delta^2}\left(\overline{\lambda}\mu+\delta^2\sum\limits_{k=1}^r\is{x_k}{y_k}\right).
    \]
For $k=1,\ldots, r$, let $\iota_k$ be the isometric embedding of $L^2(\mathcal{G})$ into $L^2(\mu_{r-1}(\mathcal{G}))$ 
\[
\iota_k: L^2(\mathcal{G})\ni x \longmapsto \sqrt{\frac{1+r\delta^2}{\delta^2}} \begin{pmatrix}
    0 \\0 \\\vdots \\ 0\\ x\\ 0 \\ \vdots \\ 0
\end{pmatrix} \in L^2(\mu_{r-1}(\mathcal{G})),
\]
where the element $x$ is embedded into the $k$th summand of $\bigoplus\limits_{k=1}^r L^2(\mathcal{G})$. Note that 
\[\iota_k^\ast \begin{pmatrix}
    x_0  \\\vdots  \\ x_k\\ \vdots \\x_r
\end{pmatrix} =\sqrt{\frac{\delta^2}{1+r\delta^2}} x_k.\]
We  also  define the isometric embedding $\iota_0$ of $\CC$ into $L^2(\mu_{r-1}(\mathcal{G}))$,
\[
\iota_0: \CC\ni \lambda \mapsto \sqrt{1+r\delta^2} \begin{pmatrix}
    \lambda \\ 0 \\ \vdots \\ 0
\end{pmatrix}\in L^2(\mu_{r-1}(\mathcal{G})) 
\] and we have \[\iota_0^\ast \begin{pmatrix}
    x_0  \\ \vdots \\x_r
\end{pmatrix} =\sqrt{\frac{1}{1+r\delta^2}} x_0\]
For every $k,l=1,\ldots, r$ we have $\iota_l^\ast \iota_k =\delta_{kl}\id_{L^2(\mathcal{G})}$, and $\iota_0^\ast \iota_0=\id_\CC$. Furthermore, 
\[
\sum\limits_{j=0}^r \iota_j \iota_j^\ast =\id_{L^2(\mu_{r-1}(\mathcal{G}))}  
\]
Denoting $m_0=m_\bullet$ and $m_j=\delta^{-1}m_{\mathcal{G}}$ for $j=1,\ldots r$ we have 
\begin{equation}\label{mmu}
m_{\mu_{r-1}(\mathcal{G})}=\sqrt{1+r\delta^2} \sum\limits_{k=0}^r \iota_k m_k(\iota_k^\ast \otimes \iota_k^\ast),
\end{equation}
and
\[
m_{\mu_{r-1}(\mathcal{G})}^\ast=\sqrt{1+r\delta^2} \sum\limits_{k=0}^r (\iota_k\otimes\iota_k) m_k^\ast \iota_k^\ast.
\]
The quantum adjacency matrix can be written as
\begin{equation}\label{eq:amu}
A_{\mu_{r-1}(\mathcal{G})}=\delta \iota_r \eta_{\mathcal{G}} \iota_0^\ast+\delta\iota_0\eta^\ast_{\mathcal{G}} \iota_r^\ast+ \iota_1 A_{\mathcal{G}}\iota_1^\ast +\sum\limits_{k=1}^{r-1}\left(\iota_k A_\mathcal{G}\iota_{k+1}^\ast + \iota_{k+1} A_\mathcal{G}\iota_k^\ast\right).
\end{equation} 
From this formula we deduce that $A_{\mu_{r-1}(\mathcal{G})}$ is a selfadjoint operator on $L^2(\mu_{r-1}(\mathcal{G}))$. Let us prove that $\psi_{\mu_{r-1}(\mathcal{G})}$ is a $\sqrt{1+r\delta^2}$-form.

\begin{proposition}
\label{prop:delta1}
    The equation 
    \[m_{\mu_{r-1}(\mathcal{G})}m^\ast_{\mu_{r-1}(\mathcal{G})}=(1+r\delta^2)\id_{L^2(\mathcal{G})} \] 
    holds, and in particular $\psi_{\mu_{r-1}(\mathcal{G})}$ is a $\sqrt{1+r\delta^2}$-form.
\end{proposition}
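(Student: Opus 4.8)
The plan is to compute $m_{\mu_{r-1}(\mathcal{G})}m^\ast_{\mu_{r-1}(\mathcal{G})}$ directly from the factorization \eqref{mmu} and its adjoint, exploiting the orthogonality relations among the embeddings $\iota_0,\iota_1,\dots,\iota_r$ recorded just above the statement. First I would note that each building-block multiplication map is, after the chosen rescaling, a coisometry: for $j=1,\dots,r$ one has $m_j m_j^\ast = \delta^{-2}m_{\mathcal{G}} m_{\mathcal{G}}^\ast = \delta^{-2}\cdot\delta^2\,\id_{L^2(\mathcal{G})} = \id_{L^2(\mathcal{G})}$ since $\psi_{\mathcal{G}}$ is a $\delta$-form, while $m_0 m_0^\ast = m_\bullet m_\bullet^\ast = \id_{\CC}$ because the multiplication map of the one-dimensional algebra $\CC$ with state $\id$ is already a coisometry (equivalently, the trivial quantum space $\bullet$ is a $1$-form).

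Next I would substitute into the product. Using \eqref{mmu} together with the displayed formula for $m_{\mu_{r-1}(\mathcal{G})}^\ast$ and the identity $(\iota_k^\ast\otimes\iota_k^\ast)(\iota_l\otimes\iota_l) = (\iota_k^\ast\iota_l)\otimes(\iota_k^\ast\iota_l)$,
\[
m_{\mu_{r-1}(\mathcal{G})}m^\ast_{\mu_{r-1}(\mathcal{G})} = (1+r\delta^2)\sum_{k,l=0}^r \iota_k\, m_k\bigl((\iota_k^\ast\iota_l)\otimes(\iota_k^\ast\iota_l)\bigr) m_l^\ast\,\iota_l^\ast.
\]
The relations $\iota_k^\ast\iota_l = \delta_{kl}\,\id$ (valid for all $k,l\in\{0,\dots,r\}$, since distinct embeddings have orthogonal ranges and each $\iota_k$ is isometric) annihilate every off-diagonal term and turn the inner tensor factor into $\id\otimes\id$ on the diagonal, so the double sum collapses to $\sum_{k=0}^r \iota_k\, m_k m_k^\ast\, \iota_k^\ast$. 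Applying the coisometry identities from the first paragraph this equals $\sum_{k=0}^r \iota_k\iota_k^\ast$, and the resolution of the identity $\sum_{j=0}^r\iota_j\iota_j^\ast = \id_{L^2(\mu_{r-1}(\mathcal{G}))}$ yields
\[
m_{\mu_{r-1}(\mathcal{G})}m^\ast_{\mu_{r-1}(\mathcal{G})} = (1+r\delta^2)\,\id_{L^2(\mu_{r-1}(\mathcal{G}))}.
\]
By definition of a $\delta$-form this says exactly that $\psi_{\mu_{r-1}(\mathcal{G})}$ is a $\sqrt{1+r\delta^2}$-form.

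There is no deep obstacle here; the proof is bookkeeping within the formalism assembled immediately before the statement, and the only genuinely load-bearing external input is Definition~\ref{def:QG} (the $\delta$-form condition) applied to $\mathcal{G}$. The single point requiring care is the bottom summand $\bullet$: one must be sure that $m_\bullet m_\bullet^\ast = \id_\CC$, i.e.\ that the distinguished vertex contributes with ``$\delta = 1$'', which is precisely why the normalizing constant is $1+r\delta^2$ rather than $(r+1)\delta^2$. One could alternatively bypass \eqref{mmu} and compute $m_{\mu_{r-1}(\mathcal{G})}^\ast\begin{pmatrix}\lambda\\x_1\\\vdots\\x_r\end{pmatrix}$ and then $m_{\mu_{r-1}(\mathcal{G})}$ by hand, but that forces one to spell out the comultiplication $m_{\mathcal{G}}^\ast$ explicitly, whereas the factorized form keeps the argument transparent.
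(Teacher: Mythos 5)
Your proof is correct and follows essentially the same route as the paper: expand the product using the factorization \eqref{mmu}, kill the off-diagonal terms via the orthogonality of the embeddings $\iota_k$, apply $m_\bullet m_\bullet^\ast=\id_\CC$ and $m_{\mathcal{G}}m_{\mathcal{G}}^\ast=\delta^2\id$, and finish with the resolution of the identity $\sum_j\iota_j\iota_j^\ast=\id$. The extra care you take with the $\bullet$-summand and with the interpretation of $\iota_k^\ast\iota_l=\delta_{kl}\id$ is consistent with (and slightly more explicit than) the paper's computation.
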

\begin{proof}
We compute 
\begin{align*}
m_{\mu_{r-1}(\mathcal{G})}m^\ast_{\mu_{r-1}(\mathcal{G})}&=(1+r\delta^2) \iota_0  m^{}_{\bullet}  m^\ast_{\bullet}  \iota_0^\ast +\frac{1+r\delta^2}{\delta^2}\sum\limits_{k=1}^r\iota_k  m_{\mathcal{G}}  m_{\mathcal{G}}^\ast \iota_k^\ast\\&=(1+r\delta^2) \iota_0\iota_0^\ast +\frac{1+r\delta^2}{\delta^2}\delta^2\sum\limits_{k=1}^r\iota_k  \iota_k^\ast  = (1+r\delta^2)\id_{L^2(\mu_{r-1}(\mathcal{G}))}. 
\end{align*}
\end{proof}

\begin{proposition}
\label{prop:Myc1}
    For any quantum graph $\mathcal{G}$ and any $r\geq 1$, the generalized Mycielski transformation $\mu_{r-1}(\mathcal{G})$  is a quantum graph. Moreover, if $\mathcal{G}$ was irreflexive (resp. reflexive), then $\mu_{r-1}(\mathcal{G})$ is so. 
\end{proposition}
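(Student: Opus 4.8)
Set $\mathcal{M}:=\mu_{r-1}(\mathcal{G})$. The plan is to verify each clause of Definition~\ref{def:QG} for $\mathcal{M}$, together with the reflexivity dichotomy. Four clauses are immediate: $\mathbb{G}_{\mathcal{M}}$ is a finite quantum space by construction; $\psi_{\mathcal{M}}$ is a faithful state, being a strictly positive weighted direct sum of the faithful states $\id$ on $\CC$ and $\psi_\mathcal{G}$ on $\C(\mathcal{G})$ with weights $(1+r\delta^2)^{-1}$ and $\delta^2(1+r\delta^2)^{-1}$, which sum to one; that $\psi_{\mathcal{M}}$ is a $\sqrt{1+r\delta^2}$-form is exactly Proposition~\ref{prop:delta1}; and self-adjointness of $A_{\mathcal{M}}$ is read off from~\eqref{eq:amu}, each summand of which is either self-adjoint ($\iota_1 A_\mathcal{G}\iota_1^\ast$) or occurs paired with its adjoint. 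So the substance lies in the two structural identities~\eqref{eq1:defA} and~\eqref{eq2:defA} for $A_{\mathcal{M}}$, plus the reflexivity computation. Throughout I will use the orthogonality relations $\iota_l^\ast\iota_k=\delta_{kl}\id$ (the mixed case $k=0$, $l\ge1$ holding because the summands of $L^2(\mathcal{M})$ are mutually orthogonal) and the identities $m_km_k^\ast=\id$ for all $k$ (trivial for $k=0$; for $k\ge1$ because $m_k=\delta^{-1}m_\mathcal{G}$ and $\psi_\mathcal{G}$ is a $\delta$-form).

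For~\eqref{eq1:defA}, substitute~\eqref{eq:amu},~\eqref{mmu} and its adjoint into $(1+r\delta^2)^{-1}m_{\mathcal{M}}(A_{\mathcal{M}}\otimes A_{\mathcal{M}})m_{\mathcal{M}}^\ast$ and expand. Write $A_{\mathcal{M}}=\sum\iota_k T_{kl}\iota_l^\ast$ with $T_{kl}:=\iota_k^\ast A_{\mathcal{M}}\iota_l$, which by~\eqref{eq:amu} is nonzero only for $(k,l)$ in the ``edge set'' $E:=\{(1,1)\}\cup\{(k,k{+}1),(k{+}1,k):1\le k\le r{-}1\}\cup\{(0,r),(r,0)\}$; the orthogonality relations collapse the double sum to $\sum_{(k,l)\in E}\iota_k\, m_k(T_{kl}\otimes T_{kl})m_l^\ast\,\iota_l^\ast$. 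For a pair with $k,l\ge1$ one has $m_k=m_l=\delta^{-1}m_\mathcal{G}$ and $T_{kl}=A_\mathcal{G}$, so the inner block equals $\delta^{-2}m_\mathcal{G}(A_\mathcal{G}\otimes A_\mathcal{G})m_\mathcal{G}^\ast=A_\mathcal{G}$ by~\eqref{eq1:defA} for $\mathcal{G}$; for the pair $(r,0)$ one has $T_{r0}=\delta\eta_\mathcal{G}$, and since $m_\bullet^\ast(1)=1\otimes1$ and $m_\mathcal{G}(\eta_\mathcal{G}\otimes\eta_\mathcal{G})(1\otimes1)=\I_{\C(\mathcal{G})}=\eta_\mathcal{G}(1)$, the block equals $\delta^{-1}m_\mathcal{G}(\delta\eta_\mathcal{G}\otimes\delta\eta_\mathcal{G})m_\bullet^\ast=\delta\eta_\mathcal{G}$; the pair $(0,r)$ gives its adjoint. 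Reassembling the surviving blocks reproduces~\eqref{eq:amu} verbatim, so~\eqref{eq1:defA} holds.

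Identity~\eqref{eq2:defA} goes the same way but is the main obstacle, since it also feeds in $\eta_{\mathcal{M}}$ and thereby couples the $\bullet$-block to the copies. The preliminary step is to record $\eta_{\mathcal{M}}=(1+r\delta^2)^{-1/2}\big(\iota_0+\delta\sum_{k=1}^r\iota_k\eta_\mathcal{G}\big)$ (read off from $\I_{\C(\mathcal{M})}=(1,\I,\dots,\I)$ and the scalings of the $\iota_k$), and hence, using $m_\bullet^\ast(1)=1\otimes1$ and the adjoint of~\eqref{mmu}, that $m_{\mathcal{M}}^\ast\eta_{\mathcal{M}}(1)=\iota_0(1)\otimes\iota_0(1)+\sum_{k=1}^r(\iota_k\otimes\iota_k)\,m_\mathcal{G}^\ast\eta_\mathcal{G}(1)$. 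Feeding this, together with $\id\otimes\eta_{\mathcal{M}}^\ast m_{\mathcal{M}}$ and~\eqref{eq:amu}, into the right-hand side of~\eqref{eq2:defA} and expanding, the orthogonality relations leave exactly two types of surviving contributions: the one carried by $\iota_0(1)\otimes\iota_0(1)$, which, interacting with the coupling part $\delta\iota_r\eta_\mathcal{G}\iota_0^\ast+\delta\iota_0\eta_\mathcal{G}^\ast\iota_r^\ast$ of $A_{\mathcal{M}}$, regenerates that coupling part once one invokes $\eta_\mathcal{G}^\ast\eta_\mathcal{G}=\id_{\CC}$ (i.e.\ $\psi_\mathcal{G}(\I)=1$) and Lemma~\ref{lem:eta} for $\mathcal{G}$; and, for each $k\ge1$, a block supported between the $k$th copy and an adjacent copy which, after cancelling the factors $\delta^{\pm1}$ coming from $m_k=\delta^{-1}m_\mathcal{G}$, is precisely the right-hand side of~\eqref{eq2:defA} for $\mathcal{G}$ and hence equals $A_\mathcal{G}$. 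Reassembling the blocks, and checking the scalar normalisations, gives~\eqref{eq:amu}, so~\eqref{eq2:defA} holds.

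Finally, for the reflexivity/irreflexivity claims I would compute $m_{\mathcal{M}}(A_{\mathcal{M}}\otimes\id_{L^2(\mathcal{M})})m_{\mathcal{M}}^\ast$ directly. Inserting~\eqref{mmu} and its adjoint and using $\iota_k^\ast\iota_l=\delta_{kl}\id$ collapses it to $(1+r\delta^2)\sum_k\iota_k\, m_k(T_{kk}\otimes\id)m_k^\ast\,\iota_k^\ast$; since $T_{kk}=\iota_k^\ast A_{\mathcal{M}}\iota_k$ vanishes for $k\neq1$ while $T_{11}=A_\mathcal{G}$, this reduces to $(1+r\delta^2)\delta^{-2}\,\iota_1\, m_\mathcal{G}(A_\mathcal{G}\otimes\id)m_\mathcal{G}^\ast\,\iota_1^\ast$, so the claim follows from the corresponding property of $\mathcal{G}$. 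The genuinely delicate point of the whole argument is the bookkeeping in the~\eqref{eq2:defA} computation — tracking which $\iota$-blocks survive the orthogonality collapse and carrying the accumulated scalar factors correctly — but structurally it is the same collapse-and-reduce mechanism as for~\eqref{eq1:defA}.
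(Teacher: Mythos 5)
Your argument is correct and follows essentially the same route as the paper's: expand $m_{\mu_{r-1}(\mathcal{G})}$, $m_{\mu_{r-1}(\mathcal{G})}^\ast$, $A_{\mu_{r-1}(\mathcal{G})}$ and $\eta_{\mu_{r-1}(\mathcal{G})}$ in the blocks $\iota_k$, let the orthogonality relations kill all off-diagonal tensor legs, and reduce each surviving block to the corresponding axiom for $\mathcal{G}$ via Lemma~\ref{lem:eta} and Eqs.~\eqref{eq1:defA}--\eqref{eq2:defA}; your $T_{kl}$ bookkeeping is only a tidier notation for the same collapse the paper carries out term by term. One shared caveat: in the reflexive case both your computation and the paper's actually produce $(1+r\delta^2)\,\iota_1\iota_1^\ast$ rather than $(1+r\delta^2)\,\id_{L^2(\mu_{r-1}(\mathcal{G}))}$, so the ``reflexive $\Rightarrow$ reflexive'' half of the final claim does not literally follow from the displayed identity (only the irreflexive half does), but this is inherited from the paper rather than introduced by you.
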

\begin{proof}
To prove the claim, we have to verify that $\mu_{r-1}(\mathcal{G})$ fulfills all the axioms from Definition~\ref{def:QG}.

First, by Proposition~\ref{prop:delta1} we know that $\psi_{\mu_{r-1}(\mathcal{G})}$ is a $\sqrt{1+r\delta^2}$-form. Moreover, $A_{\mu_{r-1}(\mathcal{G})}\in B(L^2(\mu_{r-1}(\mathcal{G})))$ is a selfadjoint operator (c.f. Eq. \eqref{eq:amu}). 

Next, we compute
\begin{align*}
    m_{\mu_{r-1}(\mathcal{G})}\left(A_{\mu_{r-1}(\mathcal{G})}\otimes A_{\mu_{r-1}(\mathcal{G})}\right)m_{\mu_{r-1}(\mathcal{G})}^\ast = &(r \delta^2 +1)\left(\iota_0 m_\bullet (\iota_0^\ast \otimes\iota_0^\ast)+\frac{1}{\delta} \sum\limits_{k=1}^{r}\iota_k m_{\mathcal{G}}(\iota_k^\ast\otimes \iota_k^\ast )\right) \\
    &\times \left(A_{\mu_{r-1}(\mathcal{G})}\otimes A_{\mu_{r-1}(\mathcal{G})}\right)\left((\iota_0\otimes\iota_0)m_\bullet^\ast \iota_0^\ast + \frac{1}{\delta}\sum\limits_{k=1}^{r}(\iota_k\otimes \iota_k)m_{\mathcal{G}}^\ast \iota_k^\ast\right).
\end{align*}
Since $m_{\mu_{r-1}(\mathcal{G})}$ and $m_{\mu_{r-1}(\mathcal{G})}^\ast$ contain only tensors of the form $\iota_j^\ast \otimes \iota_j^\ast$ and $\iota_j \otimes \iota_j$ ($j=0,\ldots,r$), respectively (i.e. diagonal in the subscripts), the only non-zero contribution from $A_{\mu_{r-1}(\mathcal{G})}\otimes A_{\mu_{r-1}(\mathcal{G})}$ is
\begin{align*}
    &\delta^2(\iota_r\otimes \iota_r)(\eta_{\mathcal{G}}\otimes \eta_{\mathcal{G}})(\iota_0^\ast \otimes \iota_0^\ast)+\delta^2(\iota_0\otimes \iota_0)(\eta^\ast_{\mathcal{G}}\otimes \eta^\ast_{\mathcal{G}})(\iota_r^\ast \otimes \iota_r^\ast)+(\iota_1\otimes \iota_1)(A_{\mathcal{G}}\otimes A_{\mathcal{G}})(\iota_1^\ast \otimes \iota_1^\ast)\\
    +& \sum\limits_{k=1}^{r-1}\left[(\iota_k\otimes \iota_k)(A_{\mathcal{G}}\otimes A_{\mathcal{G}})(\iota_{k+1}^\ast \otimes \iota_{k+1}^\ast)+(\iota_{k+1}\otimes\iota_{k+1})(A_{\mathcal{G}}\otimes A_{\mathcal{G}})(\iota_k^\ast \otimes\iota_k^\ast)\right].
\end{align*}
Therefore,
\begin{align*}
     m_{\mu_{r-1}(\mathcal{G})}\left(A_{\mu_{r-1}(\mathcal{G})}\otimes A_{\mu_{r-1}(\mathcal{G})}\right)m_{\mu_{r-1}(\mathcal{G})}^\ast = &(1+r\delta^2)\left\{\delta \iota_0 m_\bullet (\eta^\ast_{\mathcal{G}}\otimes \eta^\ast_{\mathcal{G}})m_{\mathcal{G}}^\ast \iota_r^\ast+\delta \iota_r m_{\mathcal{G}}(\eta_{\mathcal{G}}\otimes\eta_{\mathcal{G}})m_\bullet^\ast \iota_0^\ast\right.\\
     &+\left.\delta^{-2}\iota_1 m_{\mathcal{G}} (A_{\mathcal{G}}\otimes A_{\mathcal{G}})m_{\mathcal{G}}^\ast\iota_1^\ast \right.\\
     & + \left.
     \delta^{-2}\sum\limits_{k,p=1}^r\sum\limits_{l=1}^{r-1}\iota_k m_{\mathcal{G}}(\iota_k^\ast\otimes \iota_k^\ast)\left[(\iota_l\otimes\iota_l)(A_{\mathcal{G}}\otimes A_{\mathcal{G}})(\iota_{l+1}^\ast\otimes \iota_{l+1}^\ast)\right.\right.\\
     &\hspace{1.8cm}\left.\left.+(\iota_{l+1}\otimes \iota_{l+1})(A_{\mathcal{G}}\otimes A_{\mathcal{G}})(\iota_l^\ast\otimes\iota_l^\ast)\right](\iota_p\otimes \iota_p)m_{\mathcal{G}}^\ast \iota_p
     \right\}.
\end{align*}
This can be further simplified into
\begin{align*}
     m_{\mu_{r-1}(\mathcal{G})}\left(A_{\mu_{r-1}(\mathcal{G})}\otimes A_{\mu_{r-1}(\mathcal{G})}\right)m_{\mu_{r-1}(\mathcal{G})}^\ast = &(1+r\delta^2) \Big\{\delta \iota_0 m_\bullet (\eta^\ast_{\mathcal{G}}\otimes \eta^\ast_{\mathcal{G}})m_{\mathcal{G}}^\ast \iota_r^\ast+\delta \iota_r m_{\mathcal{G}}(\eta_{\mathcal{G}}\otimes\eta_{\mathcal{G}})m_\bullet^\ast \iota_0^\ast \\
     &+ \delta^{-2}\iota_1 m_{\mathcal{G}} (A_{\mathcal{G}}\otimes A_{\mathcal{G}})m_{\mathcal{G}}^\ast\iota_1^\ast \\
     & +  \delta^{-2}\sum\limits_{k=1}^{r-1}\left(\iota_k m_{\mathcal{G}}(A_{\mathcal{G}}\otimes A_{\mathcal{G}})m_{\mathcal{G}}^\ast\iota_{k+1}^\ast + \iota_{k+1}m_{\mathcal{G}}(A_{\mathcal{G}}\otimes A_{\mathcal{G}})m_{\mathcal{G}}^\ast\iota_{k}^\ast\right)
     \Big\}.
\end{align*}
Since $m_{\mathcal{G}} (A_{\mathcal{G}}\otimes A_{\mathcal{G}})m_{\mathcal{G}}^\ast=\delta^2 A_{\mathcal{G}}$ and $m_{\mathcal{G}}(\eta_{\mathcal{G}}\otimes\eta_{\mathcal{G}})m_\bullet^\ast = \eta_{\mathcal{G}}$, we get
\begin{align*}
     m_{\mu_{r-1}(\mathcal{G})}\left(A_{\mu_{r-1}(\mathcal{G})}\otimes A_{\mu_{r-1}(\mathcal{G})}\right)m_{\mu_{r-1}(\mathcal{G})}^\ast =(r \delta^2+1)A_{\mu_{r-1}(\mathcal{G})}.
\end{align*}

In order to show that
\begin{align*}
       (\id\otimes \eta_{\mu_{r-1}(\mathcal{G})}^\ast m_{\mu_{r-1}(\mathcal{G})})(\id\otimes A_{\mu_{r-1}(\mathcal{G})}\otimes \id)(m_{\mu_{r-1}(\mathcal{G})}^\ast \eta_{\mu_{r-1}(\mathcal{G})} \otimes\id)=A_{\mu_{r-1}(\mathcal{G})} 
\end{align*}
notice that since $\eta_{\mu_{r-1}(\mathcal{G})}=\frac{1}{\sqrt{1+r\delta^2}}\left(\iota_0\eta_\bullet + \delta \sum\limits_{k=1}^r \iota_k \eta_{\mathcal{G}}\right)$, and Eq. \eqref{mmu} holds, it follows that
\begin{align*}
    \eta^\ast_{\mu_{r-1}(\mathcal{G})}m_{\mu_{r-1}(\mathcal{G})}=\eta_\bullet^\ast m_\bullet(\iota_0^\ast \otimes \iota_0^\ast) +\sum\limits_{k=1}^r \eta^\ast_{\mathcal{G}}m_{\mathcal{G}}(\iota_k^\ast \otimes\iota_k^\ast),
\end{align*}
and
\begin{align*}
m^\ast_{\mu_{r-1}(\mathcal{G})}\eta_{\mu_{r-1}(\mathcal{G})}=(\iota_0 \otimes\iota_0)m_\bullet^\ast \eta_\bullet +\sum\limits_{k=1}^r(\iota_k\otimes\iota_k)m^\ast_{\mathcal{G}}\eta_{\mathcal{G}}.
\end{align*}
Therefore,
\begin{align*}
    (\id\otimes \eta_{\mu_{r-1}(\mathcal{G})}^\ast m_{\mu_{r-1}(\mathcal{G})})&(\id\otimes A_{\mu_{r-1}(\mathcal{G})}\otimes \id)(m_{\mu_{r-1}(\mathcal{G})}^\ast \eta_{\mu_{r-1}(\mathcal{G})} \otimes\id)\\
    =& \left[ \id \otimes \eta_\bullet^\ast m_\bullet (\iota_0^\ast \otimes\iota_0^\ast) + \id \otimes \sum\limits_{k=1}^r \eta^\ast_{\mathcal{G}}m_{\mathcal{G}}(\iota_k^\ast \otimes \iota_k^\ast) \right]\\
    &\times \Biggl[\delta \,\id\otimes \iota_r \eta_{\mathcal{G}} \iota_0^\ast\otimes \id+\delta\,\id\otimes\iota_0\eta^\ast_{\mathcal{G}} \iota_r^\ast\otimes\id+ \id\otimes\iota_1 A_{\mathcal{G}}\iota_1^\ast\otimes\id \Biggr.\\ &\left.+\sum\limits_{k=1}^{r-1}\left(\id\otimes\iota_k A_\mathcal{G}\iota_{k+1}^\ast \otimes\id+ \id\otimes\iota_{k+1} A_\mathcal{G}\iota_k^\ast\otimes\id\right)\right]\\
    &\times \left[(\iota_0\otimes\iota_0)m_\bullet^\ast \eta_\bullet \otimes\id  +\sum\limits_{k=1}^r (\iota_k\otimes\iota_k)m^\ast_{\mathcal{G}}\eta_{\mathcal{G}}\otimes\id\right].
\end{align*}
The only non-zero contributions to the above expression are:
\begin{align*}
    &(\id\otimes \eta_{\mu_{r-1}(\mathcal{G})}^\ast m_{\mu_{r-1}(\mathcal{G})})(\id\otimes A_{\mu_{r-1}(\mathcal{G})}\otimes \id)(m_{\mu_{r-1}(\mathcal{G})}^\ast \eta_{\mu_{r-1}(\mathcal{G})} \otimes\id)\\
    &=\delta \left[\id\otimes \eta_\bullet^\ast m_\bullet (\iota_0^\ast \otimes \iota_0^\ast)\right]\left[\id \otimes \iota_0 \eta^\ast_{\mathcal{G}}\iota^\ast_r\otimes \id\right]\left[(\iota_r\otimes \iota_r)m^\ast_{\mathcal{G}}\eta_{\mathcal{G}}\otimes\id\right] \\
    & +\delta\left[\id \otimes \eta^\ast_{\mathcal{G}}m_{\mathcal{G}}(\iota^\ast_r\otimes\iota^\ast_r)\right]\left[\id \otimes\iota_r \eta_{\mathcal{G}}\iota^\ast_0\otimes\id\right]\left[(\iota_0\otimes\iota_0)m^\ast_{\bullet}\eta_\bullet \otimes\id\right]\\
    &+\left[\id\otimes \eta^\ast_{\mathcal{G}}m_{\mathcal{G}}(\iota^\ast_1\otimes\iota^\ast_1)\right]\left[\id\otimes \iota_1 A_{\mathcal{G}}\iota^\ast_1 \otimes\id\right]\left[(\iota_1\otimes\iota_1)m^\ast_{\mathcal{G}}\eta_{\mathcal{G}}\otimes\id\right]\\
    & +\sum\limits_{l,p=1}^r\sum\limits_{k=1}^{r-1}\left[\id\otimes \eta^\ast_{\mathcal{G}}m_{\mathcal{G}}(\iota^\ast_l\otimes\iota^\ast_l)\right]\left[\id\otimes \iota_k A_{\mathcal{G}}\iota^\ast_{k+1}\otimes \id+\id \otimes\iota_{k+1}A_{\mathcal{G}}\iota^\ast_k\otimes\id\right]\left[(\iota_p\otimes\iota_p)m^\ast_{\mathcal{G}}\eta_{\mathcal{G}}\otimes\id\right].
\end{align*}
This can be further reduced into
\begin{align*}
    (\id&\otimes \eta_{\mu_{r-1}(\mathcal{G})}^\ast m_{\mu_{r-1}(\mathcal{G})})(\id\otimes A_{\mu_{r-1}(\mathcal{G})}\otimes \id)(m_{\mu_{r-1}(\mathcal{G})}^\ast \eta_{\mu_{r-1}(\mathcal{G})} \otimes\id)\\
    &=\delta(\id\otimes \eta_\bullet^\ast m_\bullet)(\iota_r\otimes \id \otimes\iota^\ast_0)(\id \otimes \eta^\ast_{\mathcal{G}}\otimes\id)(m^\ast_{\mathcal{G}}\otimes \id)(\eta_{\mathcal{G}}\otimes \id) \\
    &+ \delta(\id\otimes\eta^\ast_{\mathcal{G}})(\id\otimes m_{\mathcal{G}})(\id \otimes \eta_{\mathcal{G}}\otimes \id)(\iota_0\otimes\id \otimes \iota^\ast_r)(m_\bullet^\ast \eta_\bullet\otimes\id)\\
    &+(\id\otimes\eta_{\mathcal{G}}^\ast m_{\mathcal{G}})(\iota_1 \otimes A_{\mathcal{G}}\otimes\iota^\ast_{1})(m_{\mathcal{G}}^\ast \eta_{\mathcal{G}}\otimes\id)
    \\
    & +\sum\limits_{k=1}^{r-1}\left[(\id\otimes\eta_{\mathcal{G}}^\ast m_{\mathcal{G}})(\iota_k \otimes A_{\mathcal{G}}\otimes\iota^\ast_{k+1})(m_{\mathcal{G}}^\ast\eta_{\mathcal{G}}\otimes\id)+(\id\otimes\eta_{\mathcal{G}}^\ast m_{\mathcal{G}})(\iota_{k+1} \otimes A_{\mathcal{G}}\otimes\iota^\ast_{k})(m_{\mathcal{G}}^\ast \eta_{\mathcal{G}}\otimes\id)\right].
\end{align*}
Notice that,
\begin{align*}
    (\id\otimes \eta_\bullet^\ast m_\bullet)(\iota_r\otimes \id \otimes\iota^\ast_0)(\id \otimes \eta^\ast_{\mathcal{G}}\otimes\id)&(m^\ast_{\mathcal{G}}\otimes \id)(\eta_{\mathcal{G}}\otimes \id)=\\&=(\iota_r\otimes \id \otimes\iota^\ast_0)(\id \otimes \eta^\ast_{\mathcal{G}}\otimes\id)(m^\ast_{\mathcal{G}}\otimes \id)(\eta_{\mathcal{G}}\otimes \id)\\&=\iota_r ((\id \otimes\eta^\ast_{\mathcal{G}})m^\ast_{\mathcal{G}}\eta_{\mathcal{G}})\iota_0^\ast=\iota_r \eta_{\mathcal{G}}\iota_0^\ast,
\end{align*}
where to justify the first step we observe that the map $\eta_\bullet^\ast m_\bullet:\CC\otimes\CC\to \CC$ coincides with  the canonical identification of $\CC\otimes\CC$ and  $\CC$ and in the second step we used Lemma~\ref{lem:eta}. Similarly,
\begin{align*}
    (\id\otimes\eta^\ast_{\mathcal{G}})(\id\otimes m_{\mathcal{G}})(\id \otimes \eta_{\mathcal{G}}\otimes \id)(\iota_0\otimes\id \otimes \iota^\ast_r)(m_\bullet^\ast \eta_\bullet\otimes\id)=\iota_0 \eta^\ast_{\mathcal{G}}\iota_r^\ast.
\end{align*}
Moreover,
\begin{align*}
    (\id\otimes \eta^\ast_{\mathcal{G}}m_{\mathcal{G}})(\iota_k\otimes A_{\mathcal{G}}\otimes \iota^\ast_l)(m_{\mathcal{G}}^\ast \eta_{\mathcal{G}}\otimes \id)&=\iota_k(\id\otimes\eta^\ast_{\mathcal{G}}m_{\mathcal{G}})(\id\otimes A_{\mathcal{G}}\otimes \id)(m^\ast_{\mathcal{G}}\eta_{\mathcal{G}}\otimes \id)\iota_l^\ast=\iota_k A_{\mathcal{G}}\iota_l^\ast,
\end{align*}
for every $k$ and $l$ which follows from Eq.~\eqref
{eq2:defA} that is satisfied by   $A_{\mathcal{G}}$. All this demonstrates that
\begin{align*}
      (\id\otimes \eta_{\mu_{r-1}(\mathcal{G})}^\ast m_{\mu_{r-1}(\mathcal{G})})&(\id\otimes A_{\mu_{r-1}(\mathcal{G})}\otimes \id)(m_{\mu_{r-1}(\mathcal{G})}^\ast \eta_{\mu_{r-1}(\mathcal{G})} \otimes\id)=A_{\mu_{r-1}(\mathcal{G})}.
\end{align*}
In order to  check  the (ir)reflexivity of $\mu_{r-1}(\mathcal{G})$ we compute
\begin{align*}
    m_{\mu_{r-1}(\mathcal{G})}\left(A_{\mu_{r-1}(\mathcal{G})}\otimes \id\right)m^\ast_{\mu_{r-1}(\mathcal{G})}&=(1+r\delta^2)\left[\iota_0m_\bullet (\iota_0^\ast \otimes\iota_0^\ast)+\frac{1}{\delta}\sum\limits_{k=1}^r\iota_k m_{\mathcal{G}}(\iota_k^\ast \otimes \iota_k^\ast)\right]\\
    &\times \Biggl[\delta\iota_r \eta_{\mathcal{G}}\iota_0^\ast \otimes \id + \delta \iota_0\eta^\ast_{\mathcal{G}}\iota_r^\ast \otimes\id +\iota_1 A_{\mathcal{G}}\iota_1^\ast\otimes \id \Biggr.\\ &\hspace{1cm}+\left.\sum\limits_{k=1}^r\left(\iota_k A_{\mathcal{G}}\iota_{k+1}^\ast \otimes \id + \iota_{k+1}A_{\mathcal{G}}\iota_k^\ast \otimes \id\right)\right]\\
    &\times \left[(\iota_0\otimes \iota_0)m_\bullet^\ast \iota_0^\ast+\frac{1}{\delta}\sum\limits_{k=1}^r(\iota_k\otimes \iota_k)m^\ast_{\mathcal{G}}\iota^\ast_k\right]\\
    &=\frac{1+r\delta^2}{\delta^2}\iota_1 m_{\mathcal{G}}(A_{\mathcal{G}}\otimes \id)m^\ast_{\mathcal{G}}\iota^\ast_1=\begin{cases}0, \quad \mathcal{G} - \text{irreflexive},\\
    (1+r\delta^2)\id, \quad \mathcal{G} - \text{reflexive}.\end{cases}
\end{align*}
\end{proof}

\section{Chromatic numbers under Mycielski transformation}
\label{sec:chromatic}
\begin{definition}\label{def:color} \cite[Definition~2.16]{Ganesan21} Let $\mathcal{G}$ be an irreflexive quantum graph and $S_{\mathcal{G}}$ the associated operator {space} as described in Eq.~\eqref{eq:opsys}. We say that $\mathcal{G}$ possesses a quantum $c$-coloring if there exists a finite von Neumann algebra $\mathcal{N}$  and a partition of unity  $\{P_a\}_{a=1}^c\in \C(\mathcal{G})\otimes \mathcal{N}$ such that 
\[\forall_{  1\leq a\leq c} \, \forall _{X\in S_{\mathcal{G}}}:\, P_a(X\otimes \I_\mathcal{N})P_a=0.\]
The quantum chromatic number  is  defined as (c.f.~\cite{BGH}) 
\begin{equation}
    \chi_q(\mathcal{G})=\min\{c\in\mathbb{N}| \, \exists \, c\mbox{-colouring with } \dim(\mathcal{N})<\infty \},
\end{equation}
while the classical chromatic number for a quantum graph is
\begin{equation}
\chi_\mathrm{loc}(\mathcal{G})=\min\{c\in\mathbb{N}| \, \exists \, c\mbox{-colouring with } \dim(\mathcal{N})=1 \}.
\end{equation}
\end{definition}

For a classical graph $G$, it is known that $\chi_{\text{loc}}(\mu(G))=\chi_{\text{loc}}(G)+1$. We now study this type of relation in the quantum framework.

\begin{proposition}\label{prop:ineq}
     For every irreflexive quantum graph $\mathcal{G}$, $r\geq 1$ and $t\in\{\text{loc},q\}$, $\chi_{t}(\mu_{r-1}(\mathcal{G}))\leq \chi_{t}(\mathcal{G})+1$. 
\end{proposition}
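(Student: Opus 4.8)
The plan is to construct a quantum coloring of $\mu_{r-1}(\mathcal{G})$ out of a given quantum $c$-coloring of $\mathcal{G}$, adding just one extra color. Suppose $\{P_a\}_{a=1}^c\subset \C(\mathcal{G})\otimes\mathcal{N}$ is a quantum $c$-coloring of $\mathcal{G}$, meaning $\sum_a P_a=\I$ and $P_a(X\otimes\I_\mathcal{N})P_a=0$ for all $X\in S_{\mathcal{G}}$ (and $\dim\mathcal{N}=1$ in the \emph{loc} case). Recalling that $\C(\mu_{r-1}(\mathcal{G}))=\CC\oplus\C(\mathcal{G})^{\oplus r}$, I would define candidate projections $Q_a\in \C(\mu_{r-1}(\mathcal{G}))\otimes\mathcal{N}$ for $a=1,\dots,c+1$ by specifying their action on each summand. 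The natural guess, mimicking the classical Mycielskian coloring, is: on each of the $r$ copies of $\C(\mathcal{G})$ place $P_a$ for $a\le c$ and $0$ for $a=c+1$, except on the last copy $V^r$ where one instead uses the colour-$(c{+}1)$ slot (i.e. on the $r$th copy put $\I\otimes\I_\mathcal N$ in the $Q_{c+1}$ position and $0$ in all $Q_a$, $a\le c$, positions — or a shifted version thereof), and on the one-dimensional summand $\CC$ put the scalar $1$ into $Q_{c+1}$ and $0$ elsewhere. One must be a little careful about which copies get recoloured; the cleanest choice matching \eqref{Amur} is to recolour only the summands that are adjacent to $\bullet$, namely $V^r$, and to recolour $\bullet$ itself with colour $c+1$, leaving $V^1,\dots,V^{r-1}$ coloured by $\{P_a\}$. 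These $Q_a$ clearly form a partition of unity in $\C(\mu_{r-1}(\mathcal G))\otimes\mathcal N$, and have $\dim\mathcal N=1$ when the original coloring did.

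The substance of the argument is then to verify $Q_a(Y\otimes\I_\mathcal N)Q_a=0$ for every $Y\in S_{\mu_{r-1}(\mathcal G)}$ and every $a$. For this I would first describe $S_{\mu_{r-1}(\mathcal G)}$ concretely, using formula \eqref{eq:amu} for $A_{\mu_{r-1}(\mathcal G)}$ together with the definition $S=P(B(L^2))$ where $P(X)=\delta^{-2}m(A\otimes X)m^\ast$. Because $A_{\mu_{r-1}(\mathcal G)}$ is block-structured with blocks $\iota_k A_{\mathcal G}\iota_{k+1}^\ast$ (adjacent copies), $\iota_1A_{\mathcal G}\iota_1^\ast$ (a loop on $V^1$, present iff $\mathcal G$ is reflexive — here $\mathcal G$ is irreflexive so this term vanishes), and $\delta\iota_r\eta_{\mathcal G}\iota_0^\ast+\delta\iota_0\eta_{\mathcal G}^\ast\iota_r^\ast$ (edges between $V^r$ and $\bullet$), a generator $X\in S_{\mu_{r-1}(\mathcal G)}$ decomposes into: (i) $\iota_k s\,\iota_{k+1}^\ast$ and $\iota_{k+1}s\,\iota_k^\ast$ with $s\in S_{\mathcal G}$, (ii) the rank-one type pieces linking $\bullet$ and $V^r$, and (iii) the corner $\bullet\bullet$ entry. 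Then $Q_a(X\otimes\I)Q_a$ is checked summand by summand: for the inter-copy pieces (i) with $k\le r-2$, both copies $V^k,V^{k+1}$ carry the coloring $\{P_a\}$, so the term is $(\I_{V^k}\!\otimes P_a)(\iota_k s\iota_{k+1}^\ast\otimes\I)(P_a\otimes\I_{V^{k+1}})$ which vanishes by the original coloring property applied to $s\in S_{\mathcal G}$; for $k=r-1$ the copy $V^r$ is recoloured with colour $c{+}1$, so for $a\le c$ the $Q_a$ block on $V^r$ is $0$, and for $a=c+1$ the $Q_{c+1}$ block on $V^{r-1}$ is $0$, killing the term either way; for the $\bullet$–$V^r$ pieces (ii), both $\bullet$ and $V^r$ are coloured $c{+}1$ so $Q_a$ annihilates them for $a\le c$, while for $a=c+1$ the relevant sandwich is a scalar-times-rank-one operator that one checks is $0$ because $\eta_{\mathcal G}^\ast\I=\psi_{\mathcal G}(\I)=1\ne0$ — so here I would instead recolour $\bullet$ and $V^r$ with \emph{different} colours, say $\bullet\mapsto$ colour $c+1$ and $V^r\mapsto$ colour $1$, and re-examine; (iii) the corner term sits only in the $\bullet$ slot which only $Q_{c+1}$ touches, and $Q_{c+1}(\text{corner}\otimes\I)Q_{c+1}$ is a scalar multiple of the $\bullet\bullet$ matrix unit — one checks whether $S_{\mu_{r-1}(\mathcal G)}$ actually contains a nonzero $\bullet\bullet$ component (it should not, since $\bullet$ has no loop), so this vanishes.

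The main obstacle I anticipate is exactly the bookkeeping of \emph{which} colour to reassign to $\bullet$ versus to the top copy $V^r$: the naive "$\bullet$ and $V^r$ both get the new colour $c+1$" fails because $\bullet$ and $V^r$ are adjacent in $\mu_{r-1}(\mathcal G)$, so one genuinely needs $\bullet$ and $V^r$ to receive different colours. The fix is the standard one from the classical Mycielski colouring: keep the original colouring $\{P_a\}_{a\le c}$ on all of $V^0,\dots,V^{r-1}$, recolour every vertex of $V^r$ with a \emph{single} existing colour — but that is illegal unless $V^r$ is an independent set in $\mu_{r-1}(\mathcal G)$, which it is, since $A_{\mu_{r-1}(\mathcal G)}$ has no $\iota_r(\cdot)\iota_r^\ast$ block — and give $\bullet$ the brand-new colour $c+1$. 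Concretely: $Q_a = \iota_0\,\delta_{a,c+1}\iota_0^\ast \;\oplus\; \bigoplus_{k=1}^{r-1}(\text{$a$-th block of }P_\bullet\text{ on }V^k)\;\oplus\;(\delta_{a,1}\I_{\C(\mathcal G)}\text{ on }V^r)$, suitably tensored with $\I_\mathcal N$. I expect that with this choice every one of the cases above closes: inter-copy terms among $V^1,\dots,V^{r-1}$ die by the hypothesis on $\{P_a\}$; the $V^{r-1}$–$V^r$ edge dies because $Q_a$ restricted to $V^r$ is $0$ unless $a=1$ while $Q_1$ restricted to $V^{r-1}$ is $P_1$, and $P_1(s\otimes\I)$ composed with the $V^r$-side $\I$ gives $(P_1\otimes\I)(\iota_{r-1}s\iota_r^\ast\otimes\I)(\I\otimes\I)$, which sandwiched again by $Q_1$ on $V^{r-1}$ is $P_1(s\otimes\I)P_1=0$; the $\bullet$–$V^r$ edge dies because the only colour present on $\bullet$ is $c+1$, absent on $V^r$; and the $a=c+1$ case dies on every edge since $Q_{c+1}$ is supported entirely on the $\bullet$ summand, which carries no loop. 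Writing out these verifications with the explicit $\iota_k$, $m_k$ formulae from the previous section is routine but needs care, and is where the bulk of the proof length will go.
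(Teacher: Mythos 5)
Your final colouring is not a proper colouring of $\mu_{r-1}(\mathcal G)$, so the argument does not close. You correctly see that giving both $\bullet$ and $V^r$ the new colour $c+1$ fails (they are adjacent), but the fix you settle on --- keep $\{P_a\}$ on $V^1,\dots,V^{r-1}$, paint all of $V^r$ with the single old colour $1$, and give $\bullet$ colour $c+1$ --- breaks on the $V^{r-1}$--$V^r$ edges. A generator of $S_{\mu_{r-1}(\mathcal G)}$ supported on that block has the form $\iota_{r-1}\,s\,\iota_r^\ast$ with $s=\delta^{-2}m_{\mathcal G}(A_{\mathcal G}\otimes Y)m_{\mathcal G}^\ast\in S_{\mathcal G}$, and sandwiching with your $Q_1$ gives
\[
Q_1\,\iota_{r-1}\,s\,\iota_r^\ast\,Q_1=\iota_{r-1}\,P_1\,s\,\iota_r^\ast,
\]
because $Q_1$ restricted to the $V^r$ summand is $\I$, not $P_1$; your claimed reduction to $P_1sP_1$ silently replaces that $\I$ by $P_1$. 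The operator $P_1s$ has no reason to vanish, and the classical shadow of your rule already fails: for $K_2$ with colours $1,2$ on $v_1^0,v_2^0$, painting $v_1^1,v_2^1$ both with colour $1$ in $\mu(K_2)=C_5$ makes the edge $v_1^0v_2^1$ monochromatic. The scheme also collapses for $r=1$, where $V^r=V^1$ carries the block $\iota_1A_{\mathcal G}\iota_1^\ast$ and is not independent. Relatedly, that block is \emph{not} absent when $\mathcal G$ is irreflexive --- it encodes the internal edges of the original graph in the first copy; irreflexivity says $m(A\otimes\I)m^\ast=0$, not $A=0$.

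The correct choice, and the one the paper makes, is simpler than either of your attempts: do not recolour $V^r$ at all. Set $Q_0=\iota_0\iota_0^\ast$ and $Q_i=\sum_{j=1}^r\iota_jP_i\iota_j^\ast$ for $i=1,\dots,c$, i.e.\ replicate the original colouring identically on every copy and reserve the new colour for $\bullet$ alone (this is the standard classical Mycielski colouring, in which each shadow vertex inherits the colour of its original). Then every $V^k$--$V^{k+1}$ block and the $V^1$ block reduce, exactly as in your case (i), to $P_i\,m_{\mathcal G}(A_{\mathcal G}\otimes Y_{kl})m_{\mathcal G}^\ast\,P_i=0$ by the hypothesis on $\{P_i\}$, while the $\bullet$--$V^r$ blocks die from $\iota_0^\ast\iota_r=0$ and $Q_i\iota_0=0$ for $i\geq1$. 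The rest of your setup --- the block description of $S_{\mu_{r-1}(\mathcal G)}$ via \eqref{eq:amu} and the case-by-case verification, with the $t=q$ case obtained by tensoring with $\I_{\mathcal N}$ --- matches the paper's computation and would go through once the colouring is repaired.
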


\begin{proof}
The proof for $t=q$ is an amplification of the proof for $t=\text{loc}$ which we give below.   
We show that if $\{P_k\}_{k=1}^c\subset \C(\mathcal{G})$  is a $c$-colouring of $\mathcal{G}$, then there exists a $c+1$-colouring $\{Q_k\}_{k=0}^c\subset \C(\mu_{r-1}(\mathcal{G}))$ of $\mu_{r-1}(\mathcal{G})$. Viewing $\C(\mu_{r-1}(\mathcal{G}))$ as an algebra acting on $L^2(\mu_{r-1}(\mathcal{G}))$  we define self-adjoint projections $Q_0=\iota_0\iota^\ast _0$ and  
$Q_i=\sum\limits_{j=1}^r\iota_j P_i\iota_j^\ast $ for $i=1,2,\ldots, c$ which incidentally form a partition of unity of $\C(\mu_{r-1}(\mathcal{G}))$:
\begin{align*}
\sum\limits^r_{i=0}Q_0 = \iota_0\iota
_0^\ast  +\sum\limits^r_{i,j=1}\iota_j P_i\iota_j^\ast = \iota_0\iota_0^\ast  +\sum\limits^r_{ j=1}\iota_j  \iota_j^\ast = \I.
\end{align*} In order to conclude that $\{Q_k\}_{k=0}^c$ is $c+1$-colouring of $\mu_{r-1}(\mathcal{G})$, take $X\in S_{\mu_{r-1}(\mathcal{G})}$, i.e.
\[X = m_{\mu_{r-1}(\mathcal{G})}(A_{\mu_{r-1}(\mathcal{G})}\otimes Y)m_{\mu_{r-1}(\mathcal{G})}^\ast \] for some $Y\in B(L^2(\mu_{r-1}(\mathcal{G})))$. Remembering that $L^2(\mu_{r-1}(\mathcal{G}))$ can be identified with the direct sum of $\CC$ and $r$ copies of  $L^2( \mathcal{G})$ (c.f. Definition~\ref{rmyciel}) we shall use matrix notation $(Y_{ij})_{i,j=0,\ldots,r}$, where $Y_{00}\in\CC$, $Y_{j0},Y_{0j}^\ast\in L^2(\mathcal{G})$ for $j=1,\ldots,r$, and $Y_{ij}\in B(L^2(\mathcal{G}))$ for $i,j=1,\ldots,r$. 

Let us first show that $Q_0XQ_0 = 0$:
\begingroup
\allowdisplaybreaks
\begin{align*}
    Q_0 m_{\mu_{r-1}(\mathcal{G})}(A_{\mu_{r-1}(\mathcal{G})}\otimes Y)m^\ast_{\mu_{r-1}(\mathcal{G})}Q_0&=  \iota_0\iota^\ast _0 m_{\mu_{r-1}(\mathcal{G})}(A_{\mu_{r-1}(\mathcal{G})}\otimes Y)\sqrt{1+r\delta^2} \sum\limits_{k=0}^r (\iota_k\otimes\iota_k) m_k^\ast \iota_k^\ast\iota_0\iota^\ast _0\\&= \sqrt{1+r\delta^2} \iota_0\iota^\ast _0 m_{\mu_{r-1}(\mathcal{G})}(A_{\mu_{r-1}(\mathcal{G})}\otimes Y)  (\iota_0\otimes\iota_0) m_0^\ast \iota^\ast _0\\&=\sqrt{1+r\delta^2} \iota_0\iota^\ast _0 m_{\mu_{r-1}(\mathcal{G})} (\delta \iota_r \eta_{\mathcal{G}}\otimes Y\iota_0)m^\ast_0\eta_0\\
    &=\delta (1+r\delta^2)\iota_0\iota^\ast _0\iota_r m_r (\eta_{\mathcal{G}}\otimes Y_{r0})m^\ast_0\eta_0=0,
\end{align*}
since $\iota_0^\ast \iota_r=0$.

Similarly we shall show that $Q_iXQ_i=0$ for all $i=1,\ldots,c$:
\begin{align*}\allowdisplaybreaks
    Q_i& m_{\mu_{r-1}(\mathcal{G})}(A_{\mu_{r-1}(\mathcal{G})}\otimes Y)m_{\mu_{r-1}(\mathcal{G})}^\ast Q_i\\
    &= \sum\limits_{j,k=1}^r\iota_k P_i\iota_k^\ast  m_{\mu_{r-1}(\mathcal{G})}(A_{\mu_{r-1}(\mathcal{G})}\otimes Y)\sqrt{1+r\delta^2} \sum\limits_{p=0}^r (\iota_p\otimes\iota_p) m_p^\ast \iota_p^\ast  \iota_j P_i\iota_j^\ast \\
    &=\sqrt{1+r\delta^2}\sum\limits_{j=1}^r Q_i m_{\mu_{r-1}(\mathcal{G})}(A_{\mu_{r-1}(\mathcal{G})}\otimes Y)(\iota_j\otimes\iota_j)m^\ast_j P_i\iota^\ast_j\\
    &=\sqrt{1+r\delta^2}Q_i m_{\mu_{r-1}(\mathcal{G})}\Biggl\{(\delta\iota_0 \eta^\ast_{\mathcal{G}}\otimes Y \iota_r)m^\ast_r P
_i\iota_r^\ast + \sum\limits_{j=1}^{r}(\iota_1 A_{\mathcal{G}}\iota_1^\ast \otimes Y )(\iota_j\otimes\iota_j)m_j^\ast P_i\iota_j^\ast\Biggr. \\ &\hspace{2.5cm}\Biggl. +\sum\limits_{j=1}^r\sum\limits_{p=1}^{r-1}\left[(\iota_pA_{\mathcal{G}}\iota^\ast_{p+1}+\iota_{p+1}A_{\mathcal{G}}\iota_p^\ast)\otimes Y\right](\iota_j\otimes\iota_j)m_j^\ast P_i\iota_j^\ast\Biggr\}\\
    &=   \sum\limits_{l=0}^r Q_i \iota_l m_l (\iota_l^\ast \otimes\iota_l^\ast)\Biggl\{(\delta\iota_0 \eta^\ast_{\mathcal{G}}\otimes Y \iota_r)m^\ast_r P_i\iota_r^\ast +(\iota_1 A_{\mathcal{G}}\iota_1^\ast \otimes Y\iota_1)m_1^\ast P_i\iota_1^\ast\Biggr.\\
    &\hspace{2.5cm}\Biggl.+ \sum\limits_{j=1}^{r-1}\left[(\iota_jm_j(A_{\mathcal{G}}\otimes Y \iota_{j+1}))m^\ast_{j+1}P_i\iota_{j}^\ast+(\iota_{j+1}A_{\mathcal{G}}\otimes Y\iota_j)m^\ast_j P_i\iota_{j+1}^\ast\right]\Biggr\}\\
    &=Q_i\iota_0 m_0   (\delta\iota_0 \eta^\ast_{\mathcal{G}}\otimes Y_{0,r})m^\ast_r P_i\iota_r^\ast 
    + Q_i\iota_1 m_1 (A_{\mathcal{G}}\otimes Y_{11})m^\ast_1 P_i \iota_1^\ast  \\
    &\hspace{0.2cm}+  \sum\limits_{l=1}^r\sum\limits_{j=1}^{r-1} Q_i \iota_l m_l (\iota_l^\ast \otimes\iota_l^\ast) \left[(\iota_jm_j(A_{\mathcal{G}}\otimes Y \iota_{j+1}))m^\ast_{j+1}P_i\iota_j^\ast+i\iota^\ast_j+(\iota_{j+1}A_{\mathcal{G}}\otimes Y\iota_j)m^\ast_jP_i\iota_{j+1}^\ast\right] \\
    &= Q_i\iota_1 m_1 (A_{\mathcal{G}}\otimes Y_{11})m^\ast_1 P_i\iota^\ast_1   + Q_i \sum\limits_{j=1}^{r-1}\left[\iota_j m_j (A_{\mathcal{G}}\otimes Y_{j\,j+1})m^\ast_{j+1}P_i\iota_j^\ast +\iota_{j+1} m_{j+1}(A_{\mathcal{G}}\otimes Y_{j+1\,j})m^\ast_jP_i\iota_{j+1}^\ast\right] 
    \\
    &=\iota_1 P_i m_1 (A_{\mathcal{G}}\otimes Y_{11})m^\ast_1 P_i\iota^\ast_1   +  \sum\limits_{j=1}^{r-1}\left[\iota_jP_i m_j (A_{\mathcal{G}}\otimes Y_{j\,j+1})m^\ast_{j+1}P_i\iota_j^\ast +\iota_{j+1}P_i m_{j+1}(A_{\mathcal{G}}\otimes Y_{j+1\,j})m^\ast_jP_i\iota_{j+1}^\ast\right] 
    \\
   &=\iota_1 P_i m_{\mathcal{G}} (A_{\mathcal{G}}\otimes Y_{11})m^\ast_{\mathcal{G}} P_i\iota^\ast_1   +  \sum\limits_{j=1}^{r-1}\left[\iota_jP_i m_{\mathcal{G}}(A_{\mathcal{G}}\otimes Y_{j\,j+1})m^\ast_{\mathcal{G}}P_i\iota_j^\ast +\iota_{j+1}P_i m_{\mathcal{G}}(A_{\mathcal{G}}\otimes Y_{j+1\,j})m^\ast_{\mathcal{G}}P_i\iota_{j+1}^\ast\right]=0
\end{align*}
\endgroup
where in the sixth equality we used $Q_i\iota_0 = 0$ for all $i\geq 1$ and in the last equality we  refer to the fact that  $\{P_k\}_{k=1}^c\subset \C(\mathcal{G})$  is a $c$-colouring of $\mathcal{G}$.
\end{proof}

It is known that in contrast to the case $r=2$,   the generalized Mycielski construction $\mu_{r-1}$ for $r>2$ may leave classical chromatic number unchanged. Let us restrict our attention to the $r=2$-case for quantum graph $\mathcal{G}$ and suppose  that  $\{P_{k1}\}_{k=0}^{c'}$ is a $c'+1$-colouring of $\mu(\mathcal{G})$.  Since $\C(\mu(\mathcal{G})) = \CC\oplus\C( \mathcal{G} )\oplus\C(\mathcal{G})$ we can, without loss of generality  assume that  $P_0 = (1,P_{01},P_{02})$ and $P_{k1} = (0,P_{k1},P_{k2})$ for $k=1,2,\ldots,c'$. Our goal is to prove that under a mild commutativity condition $\chi_{\text{loc}}(\mu(\mathcal{G})) =\chi_{\text{loc}}(\mathcal{G})+1$. First, we observe: 
\begin{lemma}
\label{lemma:P2}
With the notation above, $P_{02} = 0$.
\end{lemma}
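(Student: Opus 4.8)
The statement to prove is that, in a $c'+1$-colouring $\{P_{k1}\}_{k=0}^{c'}$ of $\mu(\mathcal{G})$ (the $r=2$ Mycielskian) written in the block form $P_0=(1,P_{01},P_{02})$, $P_{k}=(0,P_{k1},P_{k2})$ for $k\geq 1$, the $P_{02}$ component of the first projection must vanish. The natural approach is to feed a carefully chosen element $X\in S_{\mu(\mathcal{G})}$ into the colouring condition $P_0(X\otimes\I)P_0=0$ and extract, by compressing to appropriate summands, an identity that forces $P_{02}=0$. Concretely, I would take $X=m_{\mu(\mathcal{G})}(A_{\mu(\mathcal{G})}\otimes Y)m_{\mu(\mathcal{G})}^\ast$ for a well-chosen $Y$, and use the decomposition $m_{\mu(\mathcal{G})}=\sqrt{1+2\delta^2}\sum_{k=0}^2\iota_k m_k(\iota_k^\ast\otimes\iota_k^\ast)$ together with the explicit form \eqref{eq:amu} of $A_{\mu(\mathcal{G})}$ for $r=2$, namely $A_{\mu(\mathcal{G})}=\delta\iota_2\eta_{\mathcal{G}}\iota_0^\ast+\delta\iota_0\eta_{\mathcal{G}}^\ast\iota_2^\ast+\iota_1A_{\mathcal{G}}\iota_1^\ast+\iota_1A_{\mathcal{G}}\iota_2^\ast+\iota_2A_{\mathcal{G}}\iota_1^\ast$.

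The key computation: choose $Y=\iota_0\iota_0^\ast$ (the rank-one-on-$\CC$ projection), so that in the product $m_{\mu(\mathcal{G})}(A_{\mu(\mathcal{G})}\otimes Y)m_{\mu(\mathcal{G})}^\ast$ the diagonal-in-subscript structure of $m_{\mu(\mathcal{G})}$ and $m_{\mu(\mathcal{G})}^\ast$ kills all terms of $A_{\mu(\mathcal{G})}\otimes Y$ except the one matching $\iota_0\otimes\iota_0$ on the right and the surviving index on the left; the only term of $A_{\mu(\mathcal{G})}$ compatible is $\delta\iota_0\eta_{\mathcal{G}}^\ast\iota_2^\ast$, paired with $Y=\iota_0\iota_0^\ast$, but that needs $\iota_2^\ast$ on the left leg — so instead I expect the right choice is $Y$ supported so that $Y\iota_0$ lands in the second summand. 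Working this through, $X$ reduces to an operator of the form $\delta(1+2\delta^2)\,\iota_0\,m_\bullet(\eta_{\mathcal{G}}^\ast\otimes(\,\cdot\,))m_{\mathcal{G}}^\ast\,\iota_2^\ast$ compressed appropriately, i.e. $X$ has a nonzero $(0,2)$-block built from $\eta_{\mathcal{G}}^\ast$. Then $P_0XP_0=0$ reads, in the $(0,0)$ block, as $1\cdot(\eta_{\mathcal{G}}^\ast P_{02}\,\cdots)\cdot 1=0$; choosing $Y$ so that the middle factor is $P_{02}^\ast P_{02}$ (or pairing with $X^\ast$ and using positivity) yields $\eta_{\mathcal{G}}^\ast P_{02}=0$, i.e. $\psi_{\mathcal{G}}$-orthogonality of $P_{02}\I_{\C(\mathcal{G})}$ to $\I$; combined with $P_{02}$ being a projection in $\C(\mathcal{G})$ and faithfulness of $\psi_{\mathcal{G}}$, this forces $P_{02}=0$.

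The main obstacle is bookkeeping: selecting the single $Y$ (or a small family of $Y$'s) that isolates exactly the $\iota_0$–$\iota_2$ cross term of $A_{\mu(\mathcal{G})}$ after the double compression by $\iota_k^\ast\otimes\iota_k^\ast$ and $\iota_k\otimes\iota_k$, and verifying that no other term of $A_{\mu(\mathcal{G})}$ contributes a competing summand to the $(0,0)$-block of $P_0XP_0$. One must also be careful that $P_0$ acts as $1$ on the $\CC$-summand, so the $(0,0)$-entry of $P_0XP_0$ is literally the $(0,0)$-entry of $X$, and that entry is (a positive multiple of) $\eta_{\mathcal{G}}^\ast P_{02} Y_{22} P_{02}^\ast \eta_{\mathcal{G}}$ for the chosen $Y$; taking $Y_{22}=\I$ and using positivity of $X$-type expressions (or taking $Y_{22}$ to range over a positive cone) gives $P_{02}\eta_{\mathcal{G}}=0$, hence $\psi_{\mathcal{G}}(P_{02})=\|P_{02}\eta_{\mathcal{G}}\|^2=0$ since $P_{02}$ is a self-adjoint idempotent, hence $P_{02}=0$ by faithfulness. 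A cleaner route avoiding the explicit $Y$ is to apply the colouring condition to $X$ and $X^\ast X$ and use that $P_0(X\otimes\I)P_0=0$ for \emph{all} $X\in S_{\mu(\mathcal{G})}$, noting $\delta\iota_2\eta_{\mathcal{G}}\iota_0^\ast\in A_{\mu(\mathcal{G})}$ already guarantees $\iota_2\eta_{\mathcal{G}}\iota_0^\ast$-type operators lie in $S_{\mu(\mathcal{G})}$ after one application of $P$, which directly exhibits a cross term between the $\bullet$-vertex and the second copy; compressing that by $P_0$ on both sides pins down $P_{02}$.
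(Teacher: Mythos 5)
Your high-level idea is the right one: the edge between the apex $\bullet$ and the second copy of $\mathcal{G}$ (the term $\delta\iota_2\eta_{\mathcal{G}}\iota_0^\ast+\delta\iota_0\eta_{\mathcal{G}}^\ast\iota_2^\ast$ in $A_{\mu(\mathcal{G})}$) is what forces $P_{02}=0$, and the way to see it is to compress a suitable $X=m_{\mu(\mathcal{G})}(A_{\mu(\mathcal{G})}\otimes Y)m_{\mu(\mathcal{G})}^\ast$ by $P_0$ on both sides. But the block you propose to read off is the wrong one, and the computation as written fails. Since $P_0=(1,P_{01},P_{02})$ is an element of the algebra $\CC\oplus\C(\mathcal{G})\oplus\C(\mathcal{G})$, it acts block-diagonally on $L^2(\mu(\mathcal{G}))=\CC\oplus L^2(\mathcal{G})\oplus L^2(\mathcal{G})$; hence $(P_0XP_0)_{00}=1\cdot X_{00}\cdot 1=X_{00}$, as you yourself note, and this entry cannot involve $P_{02}$ at all. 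Worse, $X_{00}$ is proportional to $m_\bullet(\iota_0^\ast A_{\mu(\mathcal{G})}\iota_0\otimes Y_{00})m_\bullet^\ast=0$ identically, because $A_{\mu(\mathcal{G})}$ has no $\iota_0(\cdot)\iota_0^\ast$ component. So the $(0,0)$-entry of $P_0XP_0$ vanishes for every $Y$ and yields no information; your claimed identity that this entry is a positive multiple of $\eta_{\mathcal{G}}^\ast P_{02}Y_{22}P_{02}^\ast\eta_{\mathcal{G}}$ contradicts your own correct observation one line earlier, and the positivity/faithfulness argument built on it therefore has nothing to stand on. (Passing to $X^\ast X$ does not help either, since $X^\ast X$ need not lie in $S_{\mu(\mathcal{G})}$.)

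The information sits in the off-diagonal blocks having one index equal to $2$. The paper's proof simply applies $P_0XP_0$ to the vector $(1,0,0)^T$: one computes $A_{\mu(\mathcal{G})}(1,0,0)^T=(0,0,\I_{\GG})^T$, so after tensoring with $Y(1,0,0)^T=(Y_{00},Y_{10},Y_{20})^T$ and recombining with $m_{\mu(\mathcal{G})}$, only the $k=2$ diagonal term survives and $X(1,0,0)^T$ is a positive multiple of $(0,0,Y_{20})^T$ (Lemma~\ref{lem:eta} collapses $m_{\mathcal{G}}(\I_{\GG}\otimes Y_{20})$ to $Y_{20}$). Applying the outer $P_0$ gives $(0,0,P_{02}Y_{20})^T=0$ for all $Y_{20}\in L^2(\mathcal{G})$, whence $P_{02}=0$ directly --- no positivity argument and no faithfulness of $\psi_{\mathcal{G}}$ are needed. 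If you redo your computation reading off the $(2,0)$- (equivalently the $(0,2)$-) block instead of the $(0,0)$-block, your argument becomes exactly this proof.
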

\begin{proof}
Consider $Y\in B(L^2(\mu(\mathcal{G})))$. Remembering that $Y = (Y_{ij})_{i,j=0,1,2}$ where  $Y_{00}\in\CC$, $Y_{10},Y_{20}\in L^2(\mathcal{G})$, we compute 
\begin{align*}
P_0 &m_{\mu(\mathcal{G})}(A_{\mu(\mathcal{G})} \otimes Y)m_{\mu(\mathcal{G})}^\ast P_0\begin{pmatrix}1\\0\\0\end{pmatrix}  = (1+2\delta^2)P_0 m_{\mu(\mathcal{G})}(A_{\mu(\mathcal{G})}\otimes Y)\left(\begin{pmatrix}1\\0\\0\end{pmatrix}\otimes \begin{pmatrix}1\\0\\0\end{pmatrix}\right)\\&= (1+2\delta^2)P_0 m_{\mu(\mathcal{G})}\left(\begin{pmatrix}0\\0\\\I_{\mathbb{G}}\end{pmatrix}\otimes Y\begin{pmatrix}1\\0\\0\end{pmatrix}\right) \\&= (1+2\delta^2)P_0 m_{\mu(\mathcal{G})}\left(\begin{pmatrix}0\\0\\\I_{\mathbb{G}}\end{pmatrix}\otimes  \begin{pmatrix}Y_{00}\\Y_{10}\\Y_{20}\end{pmatrix}\right) \\&= (1+2\delta^2)P_0\begin{pmatrix}0\\0\\Y_{20}\end{pmatrix} = (1+2\delta^2)\begin{pmatrix}0\\0\\P_{02}Y_{20}\end{pmatrix} \equiv 0
\end{align*} and since this holds for all $Y_{20}\in L^2(\mathcal{G})$, we conclude that $P_{02} = 0$. 
\end{proof}
\begin{proposition}
    Let $\mathcal{G}$ be a quantum graph with  $c'+1=\chi_{\text{loc}}(\mu(\mathcal{G}))$ and suppose that    there exists $c'+1$-colouring   $\{P_{k1}\}_{k=0}^{c'}$ of $\mu(\mathcal{G})$ with the property that $P_{01}P_{l2} =P_{l2}P_{01}$, for all $l\in\{1,\ldots, c'\}$. Then $\chi_{\text{loc}}( \mathcal{G} ) = \chi_{\text{loc}}(\mu(\mathcal{G}))-1$.
\end{proposition}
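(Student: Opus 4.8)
The plan is to convert the hypothesised $(c'+1)$-colouring $\{P_k\}_{k=0}^{c'}$ of $\mu(\mathcal{G})$ into a $c'$-colouring of $\mathcal{G}$; this yields $\chi_{\text{loc}}(\mathcal{G})\le c'$, and since $\mathcal{G}$ is irreflexive (hence so is $\mu(\mathcal{G})$, by Proposition~\ref{prop:Myc1}) Proposition~\ref{prop:ineq} supplies the opposite inequality $c'=\chi_{\text{loc}}(\mu(\mathcal{G}))-1\le\chi_{\text{loc}}(\mathcal{G})$. Writing $P_k=(P_{k0},P_{k1},P_{k2})$ according to $\C(\mu(\mathcal{G}))=\CC\oplus\C(\mathcal{G})\oplus\C(\mathcal{G})$, the first components $P_{k0}$ form a partition of unity of $\CC$, so after permuting colours I may assume $P_{00}=1$ and $P_{k0}=0$ for $k\ge 1$; then $\sum_{k=0}^{c'}P_{k1}=\I=\sum_{k=0}^{c'}P_{k2}$, Lemma~\ref{lemma:P2} gives $P_{02}=0$, and hence $\sum_{k=1}^{c'}P_{k1}=\I-P_{01}$ while $\sum_{k=1}^{c'}P_{k2}=\I$.

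First I would read off the colouring relations encoded in $S_{\mu(\mathcal{G})}$. Working from the explicit formulas~\eqref{eq:amu} and~\eqref{mmu} (with $r=2$), a short computation of $m_{\mu(\mathcal{G})}(A_{\mu(\mathcal{G})}\otimes \iota_aX\iota_b^\ast)m_{\mu(\mathcal{G})}^\ast$ — in which only the $(1,1)$, $(1,2)$, $(2,1)$ entries of $A_{\mu(\mathcal{G})}$, all equal to $A_\mathcal{G}$, contribute — shows that $\iota_1X\iota_1^\ast$, $\iota_1X\iota_2^\ast$ and $\iota_2X\iota_1^\ast$ all lie in $S_{\mu(\mathcal{G})}$ whenever $X\in S_\mathcal{G}$. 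Feeding these into the defining condition $P_k(\,\cdot\,)P_k=0$ of the colouring (here with $\mathcal{N}=\CC$), using $P_k\iota_a=\iota_aP_{ka}$ and $\iota_a^\ast\iota_b=\delta_{ab}\id$, and compressing back by $\iota_1^\ast(\,\cdot\,)\iota_1$, $\iota_1^\ast(\,\cdot\,)\iota_2$, $\iota_2^\ast(\,\cdot\,)\iota_1$, one obtains for every $k\in\{0,\dots,c'\}$ and every $X\in S_\mathcal{G}$
\[P_{k1}XP_{k1}=0,\qquad P_{k1}XP_{k2}=0,\qquad P_{k2}XP_{k1}=0.\]

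Next I would set $R_i=P_{i1}+P_{01}P_{i2}$ for $i=1,\dots,c'$. The hypothesis $P_{01}P_{i2}=P_{i2}P_{01}$, together with $P_{01}P_{i1}=0$ and orthogonality of the $P_{k2}$, makes each $R_i$ a self-adjoint projection and the family mutually orthogonal, and $\sum_{i=1}^{c'}R_i=(\I-P_{01})+P_{01}\I=\I$; so $\{R_i\}_{i=1}^{c'}\subset\C(\mathcal{G})$ is a partition of unity. For $X\in S_\mathcal{G}$ the product $R_iXR_i$ expands into the four terms $P_{i1}XP_{i1}$, $P_{i1}X(P_{01}P_{i2})$, $(P_{01}P_{i2})XP_{i1}$, $(P_{01}P_{i2})X(P_{01}P_{i2})$: the first vanishes by the relation with $k=i$; the second, after rewriting $P_{01}P_{i2}=P_{i2}P_{01}$, equals $(P_{i1}XP_{i2})P_{01}=0$; the third equals $P_{01}(P_{i2}XP_{i1})=0$; and the fourth, again using commutativity, equals $P_{i2}(P_{01}XP_{01})P_{i2}=0$ by the relation with $k=0$. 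Hence $\{R_i\}_{i=1}^{c'}$ is a $c'$-colouring of $\mathcal{G}$ (the case $c'=0$, in which $\mu(\mathcal{G})$ is edgeless, being degenerate), and the argument closes.

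I expect the first step to carry essentially all of the computational weight — casting $m_{\mu(\mathcal{G})}(A_{\mu(\mathcal{G})}\otimes\tilde Y)m_{\mu(\mathcal{G})}^\ast$ in block form and tracking which blocks survive — but it is entirely parallel to the calculations already performed in the proof of Proposition~\ref{prop:Myc1}, so the real work is bookkeeping. The conceptually load-bearing point, and where the commutativity hypothesis is genuinely used, is the four-term expansion of $R_iXR_i$: commuting $P_{01}$ past $P_{i2}$ is simultaneously what forces $R_i$ to be a projection summing to $\I$ and what lets one align each term with one of the three vanishing sandwiches $P_{k1}XP_{k1}$, $P_{k1}XP_{k2}$, $P_{k2}XP_{k1}$; without it the naive merging of colour class $0$ into the others breaks down.
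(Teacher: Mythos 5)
Your proposal is correct and follows essentially the same route as the paper: invoke Lemma~\ref{lemma:P2} to kill $P_{02}$, set $Q_l=P_{l1}+P_{01}P_{l2}$, use the commutativity hypothesis to make these a partition of unity, and check the four-term expansion of the sandwich. The only difference is one of rigor in your favour: you verify the colouring condition for all $X\in S_{\mathcal{G}}$ by first showing $\iota_aX\iota_b^\ast\in S_{\mu(\mathcal{G})}$ for $(a,b)\in\{(1,1),(1,2),(2,1)\}$, whereas the paper only computes the sandwiches against $A_{\mathcal{G}}$ itself.
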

\begin{proof}
    By Proposition~\ref{prop:ineq} it is enough to construct a $c'$ coloring $\{Q_l\}_{l=1}^{c'}$ of $\mathcal{G}$. By Lemma~\ref{lemma:P2} we can assume that $P_0=(1,P_{01},0)$ and $P_{k1}=(0,P_{k1},P_{k2})$ for $k=1,\ldots, c'$. Note that $P_{12}+\ldots+P_{c'2}=\I$. We define $Q_l = P_{l1}+P_{01}P_{l2}$ and note that 
\[Q_1+\ldots +Q_{c'} =\I .\] Moreover, under the assumed condition, we have \[Q_l^2 = Q_l = Q_l^\ast\]

Next, we compute assuming that $l>0$,
\begin{align*}
0=P_lA_{\mu(\mathcal{G})}P_l\begin{pmatrix}\lambda\\x\\y\end{pmatrix}&=P_lA_{\mu(\mathcal{G})}\begin{pmatrix}0\\P_{l1}x\\P_{l2}y\end{pmatrix}\\&=\begin{pmatrix}0\\P_{l1}A_{ \mathcal{G} }(P_{l1}x+P_{l2}y)\\P_{l2}A_{ \mathcal{G} }P_{l1}x\end{pmatrix}
\end{align*} for all $x,y\in L^2(\mathcal{G})$. Hence we have $P_{l2}A_{ \mathcal{G} }P_{l1}=0 = P_{l1}A_{ \mathcal{G} } P_{l1}$ for all $l>0$. Next,
\begin{align*}
P_0A_{\mu(\mathcal{G})}P_0\begin{pmatrix}1\\x\\y\end{pmatrix}&=P_0A_{\mu(\mathcal{G})}\begin{pmatrix}1\\P_{01}x\\0\end{pmatrix}\\&=\begin{pmatrix}0\\P_{01}A_{ \mathcal{G} }P_{01}x\\0\end{pmatrix}
\end{align*} for all $x\in L^2(\mathcal{G})$. Hence   $P_{l2}A_{ \mathcal{G} }P_{l1}=0 = P_{l1}A_{ \mathcal{G} } P_{l1}$ for all $l\geq 0$ which used in the next computation (under the assumption $P_{01}P_{l2}=P_{l2}P_{01}$) yields
\begin{align*}
Q_lA_{\mathcal{G}}Q_l &=(P_{l1}+P_{01}P_{l2})A_{\mathcal{G}}(P_{l1}+P_{l2}P_{01})\\& = P_{l1}A_{\mathcal{G}}P_{l1}+P_{l1}A_{\mathcal{G}}P_{l2}P_{01}+P_{01}P_{l2}A_{\mathcal{G}}P_{l1}+P_{l2}P_{01}A_{\mathcal{G}}P_{01}P_{l2}\\&=0\end{align*}
Thus if $P_{01}P_{l2} =P_{l2}P_{01}$ for all $l\in \{0,1,\ldots,c'\}$, then $\{Q_l\}_{l=1}^{c'}$ is a $c'$-colouring.
\end{proof}
 \begin{question}
Is there a quantum graph $\mathcal{G}$ for which $\chi_{\text{loc}}(\mu(\mathcal{G}))=\chi_{\text{loc}}(\mathcal{G})$?
\end{question} 
{
The  concept of quantum graphs homomorphism will be useful in this and the next sections.
\begin{definition} \label{def:morphism}(\cite[Definition~7]{Stahlke}) We say that there exists a homomorphism between quantum graphs $\mathcal{G}$ and $\mathcal{F}$, and write $\mathcal{G}\rightarrow \mathcal{F}$, if there exists a Hilbert space $\mathsf{H}$ and an isometry $\mathcal{J}: L^2(\mathcal{G})\rightarrow L^2(\mathcal{F})\otimes \mathsf{H}$ s.t. $\mathcal{J}S_{\mathcal{G}}\mathcal{J}^* \subseteq S_{\mathcal{F}}\otimes B(\mathsf{H})$.
\end{definition}
By \cite[Theorem 8]{Stahlke} every homomorphism in the above sense between classical graphs  corresponds to a homomorphism between these graphs.
 
\begin{definition} (\cite[Section~III, p.~6]{Stahlke}) We say that $\mathcal{G}$ is a quantum subgraph of $\mathcal{F}$ if there exists an isometry $\mathcal{J}:L^2(\mathcal{G})\rightarrow L^2(\mathcal{F})$ s.t. $\mathcal{J}S_\mathcal{G} \mathcal{J}^* \subseteq S_{\mathcal{F}}$. 
\end{definition}
\begin{remark}
\label{rk:mu}
    {Let us note that $\mathcal{G}$ is a quantum subgraph of Mycielskian $\mu_{r-1}(\mathcal{G})$, for any $r\geq 1$, where the isometry $J:L^2(\mathcal{G})\to L^2(\mu_{r-1}(\mathcal{G})) $ is given by $J(x) =  \iota_1(x)$.}
\end{remark}
%By \cite[Proposition~9]{Stahlke},
Note that if $\mathcal{G}$ is a quantum subgraph of $\mathcal{F}$, then in particular $\mathcal{G}\rightarrow \mathcal{F}$. As a direct consequence of the monotonicity property of quantum chromatic numbers \cite[Proposition~6.4]{BGH} and Remark~\ref{rk:mu} we have the following
} 
\begin{proposition}
\label{prop:mono_chrom}
For any irreflexive quantum graph $\mathcal{G}$, any $r\geq 1 $ and any  $t\in\{\text{loc},q\}$, 
 \[\chi_t(\mathcal{G})\leq \chi_t(\mu_{r-1}(\mathcal{G})).\]
\end{proposition}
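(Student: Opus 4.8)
The plan is to derive this immediately from the machinery already assembled in the excerpt, since all the ingredients are in place. The key observation, recorded in Remark~\ref{rk:mu}, is that $\mathcal{G}$ is a quantum subgraph of $\mu_{r-1}(\mathcal{G})$ via the isometry $J = \iota_1 : L^2(\mathcal{G}) \to L^2(\mu_{r-1}(\mathcal{G}))$, meaning $J S_{\mathcal{G}} J^\ast \subseteq S_{\mu_{r-1}(\mathcal{G})}$. Being a quantum subgraph is a special case of the homomorphism relation of Definition~\ref{def:morphism} (take $\mathsf{H} = \CC$), so we get $\mathcal{G} \to \mu_{r-1}(\mathcal{G})$.

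Next I would invoke the monotonicity of quantum chromatic numbers with respect to homomorphisms, i.e.\ \cite[Proposition~6.4]{BGH}, which states that $\mathcal{G} \to \mathcal{F}$ implies $\chi_t(\mathcal{G}) \leq \chi_t(\mathcal{F})$ for $t \in \{\text{loc}, q\}$. Applying this with $\mathcal{F} = \mu_{r-1}(\mathcal{G})$ gives exactly $\chi_t(\mathcal{G}) \leq \chi_t(\mu_{r-1}(\mathcal{G}))$, which is the claim. One should note that this proposition applies to irreflexive quantum graphs, which is consistent with the hypothesis of the statement; also, the quantum graph $\mu_{r-1}(\mathcal{G})$ is irreflexive whenever $\mathcal{G}$ is, by Proposition~\ref{prop:Myc1}, so the chromatic numbers on the right-hand side are well defined.

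The only point requiring a small verification is Remark~\ref{rk:mu} itself, namely that $\iota_1 S_{\mathcal{G}} \iota_1^\ast \subseteq S_{\mu_{r-1}(\mathcal{G})}$. This is where I would spend a sentence or two: given $X = m_{\mathcal{G}}(A_{\mathcal{G}} \otimes Y) m_{\mathcal{G}}^\ast \in S_{\mathcal{G}}$, one checks using the formula \eqref{eq:amu} for $A_{\mu_{r-1}(\mathcal{G})}$ together with \eqref{mmu}, the relations $\iota_k^\ast \iota_l = \delta_{kl}\,\id$, and the definition $m_1 = \delta^{-1} m_{\mathcal{G}}$, that $\iota_1 X \iota_1^\ast$ equals (up to a harmless positive scalar) $m_{\mu_{r-1}(\mathcal{G})}(A_{\mu_{r-1}(\mathcal{G})} \otimes \iota_1 Y \iota_1^\ast) m_{\mu_{r-1}(\mathcal{G})}^\ast$, because the only term of $A_{\mu_{r-1}(\mathcal{G})}$ that survives the sandwiching by the diagonal tensors $\iota_1^\ast \otimes \iota_1^\ast$ and $\iota_1 \otimes \iota_1$ is $\iota_1 A_{\mathcal{G}} \iota_1^\ast$; this is precisely the same type of cancellation already exploited in the proof of Proposition~\ref{prop:Myc1}. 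If Remark~\ref{rk:mu} is taken as given (it is stated in the excerpt), there is no obstacle at all: the proof is a two-line citation chain, and I would present it as such.

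\begin{proof}
By Remark~\ref{rk:mu}, $\mathcal{G}$ is a quantum subgraph of $\mu_{r-1}(\mathcal{G})$, and hence $\mathcal{G}\rightarrow\mu_{r-1}(\mathcal{G})$ in the sense of Definition~\ref{def:morphism}. Since $\mu_{r-1}(\mathcal{G})$ is irreflexive by Proposition~\ref{prop:Myc1}, the monotonicity of quantum chromatic numbers \cite[Proposition~6.4]{BGH} applies and yields $\chi_t(\mathcal{G})\leq\chi_t(\mu_{r-1}(\mathcal{G}))$ for $t\in\{\text{loc},q\}$.
\end{proof}
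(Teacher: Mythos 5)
Your proof is correct and is exactly the paper's argument: the result is stated there as "a direct consequence of the monotonicity property of quantum chromatic numbers \cite[Proposition~6.4]{BGH} and Remark~\ref{rk:mu}," i.e.\ the same citation chain (quantum subgraph $\Rightarrow$ homomorphism $\Rightarrow$ monotonicity). Your extra sketch verifying Remark~\ref{rk:mu} via the cancellation of off-diagonal terms of $A_{\mu_{r-1}(\mathcal{G})}$ is a welcome bonus, since the paper states that remark without proof.
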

We thank David~E.~Roberson for pointing out to us the following fact. 
\begin{remark} 
    There exists a classical graph $G$ such that $\chi_{q}(\mu(G))=\chi_{q}(G)$. Indeed, let $G_{13}$ be the graph introduced in \cite{MR2018} and defined as follows. Consider a three-dimensional cube centered on the origin of $\mathbb{R}^3$ and identify vector $v$ with $-v$. The set of vertices $V$ for $G_{13}$ consists of vectors (after the aforementioned identification) representing midpoints of the faces (three of them), midpoints of the edges (six of them), and vertices of the cube (four of them). The pair $\{u,v\}$ forms an edge in $G_{13}$ if and only if $v$ and $u$ are orthogonal as vectors in $\mathbb{R}^3$. Adding a single vertex and connecting it with all the vertices from $G_{13}$ one gets a new graph, called $G_{14}$. By \cite[Lemma~6]{MR2018}, we have $\chi_q(G_{13})=\chi_q(G_{14})$. On the other hand, by construction there is a morphism $\mu(G_{13})\rightarrow G_{14}$, so that $\chi_q(\mu(G_{13}))\leq \chi_q(G_{14})$ by \cite[Proposition~6.4]{BGH}. This shows that $ \chi_{q}(\mu(G_{13}))=\chi_{q}(G_{13})$.
\end{remark}
\section{Clique numbers}
\label{sec:clique}
Let us recall that with an irreflexive graph $G$ with $|V(G)|=n$ we associate an  operator {space} of the form
\[
S_G=\mathrm{span}\{\ket{e_i}\bra{ e_j}  :\, e_i\sim e_j\}\subseteq B(\mathbb{C}^n)
\]
with $\{e_i\}$ being the standard basis of $\mathbb{C}^n$. The clique number $\omega(G)$ of $G$, which is the size of the largest complete graph contained in $G$, can be equivalently defined \cite{Stahlke,Duan_2013} as the maximal cardinality of a set $K$ for which we can find a collection of  non-zero vectors $ \{\psi_k\in\ell^2_n:k\in K\}  $ such that $  \ket{\psi_i}\bra{\psi_j}\in S_G \text{ for } i,j,k\in K \textrm{ and } i\neq j $, i.e.
\begin{equation}
    \omega(G)=\max\left\{|\{\psi_k\in\ell^2_n:k\in K\}| \, :\,\psi_k\neq 0\,, \ket{\psi_i}\bra{\psi_j}\in S_G \text{ for } i,j,k\in K \textrm{ and } i\neq j \right\}.
\end{equation}
This definition can be adapted to the context  of a finite quantum graph $ \mathcal{G}$: the clique number for a quantum graph $\mathcal{G}=(\mathbb{G},\psi,A)$ is given by
\begin{equation}
\label{eq:clique}
    \omega(\mathcal{G})=\max\left\{|\{\psi_k\in L^2(\mathcal{G}):k\in K\}| \, :\,\psi_k\neq 0\,, \ket{\psi_i}\bra{\psi_j}\in S_\mathcal{G} \text{ for } i,j,k\in K \textrm{ and } i\neq j \right\}.
\end{equation}

The clique number can be also defined using the notion of graph homomorphism from a complete graph, i.e., by $    \omega(\mathcal{G})=\max\{|K_n|: \, K_n\rightarrow\mathcal{G}\}$ \cite{Stahlke}.
    
\begin{remark}\label{rem:embclique}
It is easy to check that  the above definition agrees with \eqref{eq:clique}. Moreover, if there is a morphism $\mathcal{G}\to\mathcal{F}$ then $\omega(\mathcal{G})\leq \omega(\mathcal{F})$. In particular $\omega(\mathcal{G})\leq \omega(\mu_{r-1}(\mathcal{G}))$ for every $r\geq 1$. 
\end{remark}

\begin{proposition}\label{clique_thm1}
    Let $\mathcal{G}$ be a quantum graph. Then $\omega(\mu(\mathcal{G}))=\omega(\mathcal{G})$.
\end{proposition}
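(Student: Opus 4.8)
Since $\omega(\mathcal{G})\le\omega(\mu(\mathcal{G}))$ is already recorded in Remark~\ref{rem:embclique}, all that remains is the reverse bound $\omega(\mu(\mathcal{G}))\le\omega(\mathcal{G})$; as in the classical theory this must be read with $\mathcal{G}$ irreflexive and possessing at least one ``edge'' (for the edgeless $\mathcal{G}$ the Mycielskian is a quantum star and $\omega(\mu(\mathcal{G}))=2$, precisely as for classical graphs). The plan is first to compute $S_{\mu(\mathcal{G})}$ explicitly. Substituting \eqref{eq:amu} (with $r=2$) for $A_{\mu(\mathcal{G})}$ and \eqref{mmu} for $m_{\mu(\mathcal{G})}$ into the idempotent $P_{\mu(\mathcal{G})}(Y)=(1+2\delta^{2})^{-1}m_{\mu(\mathcal{G})}(A_{\mu(\mathcal{G})}\otimes Y)m_{\mu(\mathcal{G})}^{\ast}$, and using — exactly as in the proofs of Propositions~\ref{prop:delta1} and~\ref{prop:Myc1} — that $m_{\mu(\mathcal{G})}$ and $m_{\mu(\mathcal{G})}^{\ast}$ carry only the ``diagonal'' legs $\iota_{j}\otimes\iota_{j}$, one gets that the $(j,k)$ block of $P_{\mu(\mathcal{G})}(Y)$ equals $m_{j}\big((\iota_{j}^{\ast}A_{\mu(\mathcal{G})}\iota_{k})\otimes(\iota_{j}^{\ast}Y\iota_{k})\big)m_{k}^{\ast}$, which is nonzero only for $(j,k)\in\{(0,2),(2,0),(1,1),(1,2),(2,1)\}$. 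Evaluating these five (Lemma~\ref{lem:eta} disposes of $(0,2)$ and $(2,0)$, while $m_{\mathcal{G}}(A_{\mathcal{G}}\otimes X)m_{\mathcal{G}}^{\ast}=\delta^{2}P_{\mathcal{G}}(X)$ disposes of the rest) yields, after identifying $L^{2}(\mu(\mathcal{G}))=\CC\oplus L^{2}(\mathcal{G})\oplus L^{2}(\mathcal{G})$ and writing operators as $3\times3$ block matrices,
\[
S_{\mu(\mathcal{G})}=\left\{\begin{pmatrix}0&0&\bra{\xi}\\ 0&s_{11}&s_{12}\\ \ket{\zeta}&s_{21}&0\end{pmatrix}\ :\ s_{11},s_{12},s_{21}\in S_{\mathcal{G}},\ \xi,\zeta\in L^{2}(\mathcal{G})\right\},
\]
the corner entries being an arbitrary functional and an arbitrary vector; the $0$ in position $(2,2)$ is the operator-space trace of ``the second copy has no internal edges''.

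Next, let nonzero vectors $\{\psi_{k}\}_{k\in K}$ realize $\omega(\mu(\mathcal{G}))$; re-indexing by the distinct vectors that actually occur (which leaves the cardinality unchanged) we may take the $\psi_{k}$ pairwise distinct. Write $\psi_{k}=(\lambda_{k},x_{k},y_{k})$ and impose $\ket{\psi_{i}}\bra{\psi_{j}}\in S_{\mu(\mathcal{G})}$ for all $i\neq j$: the $(0,0)$, $(0,1)$ and $(1,0)$ blocks force at most one $\lambda_{k}\neq 0$ and, when $\lambda_{k_{0}}\neq 0$, force $x_{k}=0$ for all $k\neq k_{0}$; the $(2,2)$ block forces at most one $y_{k}\neq 0$; and the $(1,1)$, $(1,2)$, $(2,1)$ blocks force $\ket{x_{i}}\bra{x_{j}},\ \ket{x_{i}}\bra{y_{j}},\ \ket{y_{i}}\bra{x_{j}}\in S_{\mathcal{G}}$ for $i\neq j$. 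If some $\lambda_{k_{0}}\neq 0$, then each $\psi_{k}$ with $k\neq k_{0}$ is $(0,0,y_{k})$ with $y_{k}\neq 0$, so $|K|\le 2\le\omega(\mathcal{G})$. If all $\lambda_{k}=0=y_{k}$, then $\{x_{k}\}_{k\in K}$ is a set of distinct nonzero vectors of $L^{2}(\mathcal{G})$ with $\ket{x_{i}}\bra{x_{j}}\in S_{\mathcal{G}}$ for $i\neq j$, i.e.\ a clique of $\mathcal{G}$, so $|K|\le\omega(\mathcal{G})$. Otherwise there is a unique $k^{\ast}$ with $y_{k^{\ast}}\neq 0$, and $\psi_{k}=(0,x_{k},0)$ with $x_{k}\neq 0$ for $k\neq k^{\ast}$; then $\{x_{k}:k\neq k^{\ast}\}\cup\{y_{k^{\ast}}\}$ is a family of nonzero vectors satisfying all the relations $\ket{\cdot}\bra{\cdot}\in S_{\mathcal{G}}$, hence a clique of $\mathcal{G}$ of size $|K|$ — provided its members are distinct.

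This last point is the only genuine obstacle, and it is where irreflexivity is used essentially: if $y_{k^{\ast}}=x_{k_{1}}$ for some $k_{1}\neq k^{\ast}$, then $\ket{x_{k_{1}}}\bra{x_{k_{1}}}=\ket{y_{k^{\ast}}}\bra{x_{k_{1}}}\in S_{\mathcal{G}}$, i.e.\ $S_{\mathcal{G}}$ would contain a nonzero positive operator, which cannot happen for an irreflexive quantum graph — the operator-space counterpart of the statement that an irreflexive graph has no loop, and (modulo this one input) the crux of the argument. Hence $x_{k_{1}}=0$, a contradiction, so $\{x_{k}:k\neq k^{\ast}\}\cup\{y_{k^{\ast}}\}$ is a genuine clique of $\mathcal{G}$ of size $|K|$. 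In every case $\omega(\mu(\mathcal{G}))=|K|\le\omega(\mathcal{G})$, which together with Remark~\ref{rem:embclique} gives the claimed equality. The explicit computation of $S_{\mu(\mathcal{G})}$ is the bulk of the work, but it is routine and entirely in the spirit of Section~\ref{sec:qMycielski}; the one non-routine ingredient is the no-loop fact just invoked.
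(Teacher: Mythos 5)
Your argument follows the same skeleton as the paper's: both identify the block structure of $S_{\mu(\mathcal{G})}$ (your five nonvanishing blocks $(0,2),(2,0),(1,1),(1,2),(2,1)$ are exactly Eq.~\eqref{mat_str}), decompose the clique witnesses along $\CC\oplus L^2(\mathcal{G})\oplus L^2(\mathcal{G})$, use the vanishing $(2,2)$ block to force at most one nonzero last component, and then project to the first copy. Where you genuinely diverge is in the case of a witness $\psi_{k^\ast}$ with $y_{k^\ast}\neq0$: the paper restricts to the indices $j\neq k^\ast$ and takes their first components, which as written exhibits only $|K|-1$ vectors (its claim of ``size $|K|$'' would need $\psi_{k^\ast}^1\neq0$, which does not follow), whereas your substitution of $y_{k^\ast}$ for the discarded vector, using the $(1,2)$ and $(2,1)$ blocks, recovers a clique of the full size $|K|$. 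This is a tighter argument. Your two caveats are also both legitimate and unaddressed in the paper: the statement as printed does fail when $\omega(\mathcal{G})=1$, since the pair $(1,0,0)$, $(0,0,y)$ always witnesses $\omega(\mu(\mathcal{G}))\geq 2$ through the corner blocks, so the hypothesis $\omega(\mathcal{G})\geq2$ is implicitly needed; and distinctness of the produced vectors is genuinely required by the definition $\omega=\max|\{\psi_k\}|$, which is where your no-loop lemma enters.

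The one thing you should not leave as an assertion is that lemma: that $S_{\mathcal{G}}$ of an irreflexive quantum graph contains no nonzero positive operator. It is true and standard --- irreflexivity gives $P(T)=P(\I)T=0$ for all $T\in\C(\mathcal{G})'$, and since $P$ is the \emph{orthogonal} projection onto $S_{\mathcal{G}}$ with respect to a faithful positive inner product of the form $\langle X,Y\rangle=\mathrm{Tr}(\rho\,X^\ast Y)$ with $\rho>0$ (this is how the correspondence between $A$ and the bimodule $S_{\mathcal{G}}$ is set up in \cite{DawsQGT}), one gets $\mathrm{Tr}(\rho\,s)=\langle \I,s\rangle=0$ for every $s\in S_{\mathcal{G}}$, which kills any positive $s$ --- but the orthogonality of $P$ is an input beyond what Section~\ref{sec:quantum_graph} records, so spell it out. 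With that lemma supplied (and the hypothesis $\omega(\mathcal{G})\geq2$ made explicit), your proof is complete and, in the case analysis, more careful than the one in the paper.
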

\begin{proof}
  By the Remark \ref{rem:embclique} we have $\omega(\mathcal{G})\leq \omega(\mu(\mathcal{G}))$.
  
In the proof of the converse inequality let us use the following notation: remembering that as vector spaces we have $L^2(\mu(\mathcal{G})) =\mathbb{C}\oplus  L^2( \mathcal{G} )\oplus L^2( \mathcal{G} )$,  the components of  $\varphi\in L^2(\mu(\mathcal{G}))$ will be denoted by $\varphi^0,\varphi^1,\varphi^2$ respectively. Similarly every $X\in B(L^2(\mu(\mathcal{G})))$ can be written in a matrix form \[X=\begin{pmatrix}
    X_{00}& X_{01}&X_{02}\\
    X_{10}& X_{11}&X_{12}\\
    X_{20}& X_{21}&X_{22}
\end{pmatrix}\] where $X_{00}\in\mathbb{C}$, $X_{ij}\in B(L^2(\mathcal{G}))$ and $X_{i0},X_{0i}^*\in L^2(\mathcal{G})$
 for $i,j\in\{1,2\}$.

Suppose that  the clique number $\omega(\mu(\mathcal{G}))=|K|$ where $K$ is a finite set such that there is a set of non-zero vectors  $ \{\psi_k\in L^2(\mu(\mathcal{G})):k\in K\}$ satisfying $\ket{\psi_i}\bra{\psi_j}\in S_{\mu(\mathcal{G})}$  for $i,j\in K$ and $i\neq j$. 

Let $i\neq j$. Since $|\psi_i\rangle\langle \psi_j|\in S_{\mu(\mathcal{G})}$, there exists $X^{(ij)}\in B(L^2(\mu(\mathcal{G})))$ such that
\begin{equation}\label{mat_str}
    |\psi_i\rangle \langle \psi_j|=\frac{2\delta^2+1}{\delta^2}\begin{pmatrix}
        0 & 0 & m_\bullet (\eta_{\mathcal{G}}^\ast\otimes X_{02}^{(ij)})m_{\mathcal{G}}^\ast \\
        0 & \frac{1}{\delta^2}m_{\mathcal{G}}(A_{\mathcal{G}}\otimes X_{11}^{(ij)})m_{\mathcal{G}}^\ast & \frac{1}{\delta^2}m_{\mathcal{G}}(A_{\mathcal{G}}\otimes X_{12}^{(ij)})m_{\mathcal{G}}^\ast \\
        m_{\mathcal{G}}(\eta_{\mathcal{G}}\otimes X^{(ij)}_{20})m_\bullet^\ast & \frac{1}{\delta^2}m_{\mathcal{G}}(A_{\mathcal{G}}\otimes X^{(ij)}_{21})m_{\mathcal{G}}^\ast &0
    \end{pmatrix} 
\end{equation} where the matrix structure on the right of Equation \eqref{mat_str} is a direct consequence of Definition \ref{def:MT} of $A_{\mu(\mathcal{G})}$.
On the other hand, we have
\begin{equation}\label{eq:lower_corn0}
    |\psi_i\rangle \langle \psi_j|=\begin{pmatrix}
         |\psi_i^0\rangle \langle \psi_j^0| &  |\psi_i^0\rangle \langle \psi_j^1| & |\psi_i^0\rangle \langle \psi_j^2|\\
         |\psi_i^1\rangle \langle \psi_j^0| &  |\psi_i^1\rangle \langle \psi_j^1| & |\psi_i^1\rangle \langle \psi_j^2|\\
         |\psi_i^2\rangle \langle \psi_j^0| &  |\psi_i^2\rangle \langle \psi_j^1| & |\psi_i^2\rangle \langle \psi_j^2|
    \end{pmatrix}= \begin{pmatrix}
         0 &  0 & |\psi_i^0\rangle \langle \psi_j^2|\\
         0 &  |\psi_i^1\rangle \langle \psi_j^1| & |\psi_i^1\rangle \langle \psi_j^2|\\
         |\psi_i^2\rangle \langle \psi_j^0| &  |\psi_i^2\rangle \langle \psi_j^1| & 0
    \end{pmatrix}
\end{equation}
and looking at the right lower corner of the right-hand side of \eqref{eq:lower_corn0} we conclude that   for every $i\neq j$ either $\psi_i^2=0$ or $\psi_j^2 =0 $. Suppose that there is $i_0$ such that $\psi^2_{i_0}\neq 0$ (say without loss of generality  we can take $i_0=1$). Then $\psi^2_{j}=0$ for every $j\neq 1$. Consider now  $i \neq j$ and $i,j\neq 1$. In this case 
\begin{equation}\label{eq:lower_corn}
    |\psi_i\rangle \langle \psi_j|= \begin{pmatrix}
         0 &  0 & 0\\
         0 &  |\psi_i^1\rangle \langle \psi_j^1| &0\\
        0 &  0& 0
    \end{pmatrix}
\end{equation}
and therefore $|\psi_i^1\rangle \langle \psi_j^1| \neq 0$ for all $i \neq j$ and $i,j\neq 1$ and hence $\{\psi_{2}^1,\psi_{2}^1,\psi_{3}^1,\ldots,\psi_{|K|}^1\}$ is a witness of a clique of size $|K|$ in $\mathcal{G}$.  
If $ \psi^2_i  = 0$   for all $i$  then Equation \eqref{eq:lower_corn} holds for all $i,j$. Hence  $\psi^1_i\neq 0$ for all $i$ 
   and therefore $\{\psi_{1}^1,\psi_{2}^1,\psi_{3}^1,\ldots,\psi_{|K|}^1\}$ is a witness of a clique of size $|K|$ in $\mathcal{G}$ and this ends the proof. 
 \end{proof}

{
The techniques used  in the proof of Proposition~\ref{clique_thm1} lead to the following generalization.
\begin{proposition}
    Let $\mathcal{G}$ be a quantum graph and $r\geq 1$. Then $\omega(\mu_{r-1}(\mathcal{G}))=\omega(\mathcal{G})$.
\end{proposition}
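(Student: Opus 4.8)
The plan is to repeat the proof of Proposition~\ref{clique_thm1}, with the two copies of $\mathcal{G}$ replaced by the chain of $r$ copies occurring in $\mu_{r-1}(\mathcal{G})$. Identify $L^2(\mu_{r-1}(\mathcal{G}))=\CC\oplus\bigoplus_{k=1}^{r}L^2(\mathcal{G})$, write the components of $\varphi\in L^2(\mu_{r-1}(\mathcal{G}))$ as $\varphi^0,\ldots,\varphi^r$, and write $X\in B(L^2(\mu_{r-1}(\mathcal{G})))$ in block form $(X_{kl})_{k,l=0}^{r}$, with $X_{00}\in\CC$, $X_{k0},X_{0k}^\ast\in L^2(\mathcal{G})$ and $X_{kl}\in B(L^2(\mathcal{G}))$. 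The inequality $\omega(\mathcal{G})\le\omega(\mu_{r-1}(\mathcal{G}))$ is Remark~\ref{rem:embclique}. For the converse I would first read off from the explicit formula \eqref{eq:amu} for $A_{\mu_{r-1}(\mathcal{G})}$ --- exactly as \eqref{mat_str} is obtained from Definition~\ref{def:MT} in the proof of Proposition~\ref{clique_thm1} --- that every element of $S_{\mu_{r-1}(\mathcal{G})}$ is a block matrix whose only possibly nonzero entries lie at the index pairs
\[
E':=\{(0,r),(r,0),(1,1)\}\cup\{(k,k+1),(k+1,k):1\le k\le r-1\},
\]
and that, as $X$ varies, its $(1,1)$-, $(1,2)$- and $(2,1)$-entries --- coming respectively from the summands $\iota_1A_{\mathcal{G}}\iota_1^\ast$, $\iota_1A_{\mathcal{G}}\iota_2^\ast$ and $\iota_2A_{\mathcal{G}}\iota_1^\ast$ of \eqref{eq:amu} --- run over all of $S_{\mathcal{G}}$. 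The identities needed for this last point are the same ones (ultimately Lemma~\ref{lem:eta} together with \eqref{eq2:defA}) that appear in the proof of Proposition~\ref{prop:Myc1}.

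Now take a clique witness $\{\psi_k:k\in K\}$ realizing $|K|=\omega(\mu_{r-1}(\mathcal{G}))$. For $i\ne j$ the $(k,l)$-entry of $|\psi_i\rangle\langle\psi_j|$ is $|\psi_i^k\rangle\langle\psi_j^l|$, so membership in $S_{\mu_{r-1}(\mathcal{G})}$ forces $|\psi_i^k\rangle\langle\psi_j^l|=0$ for every $(k,l)\notin E'$. Interpreting $E'$ as the edge set of the path $0-r-(r-1)-\cdots-2-1$ with an additional loop at the vertex $1$, this says: for distinct $i,j\in K$, every index of $\operatorname{supp}\psi_i:=\{l:\psi_i^l\ne 0\}$ is $E'$-adjacent to every index of $\operatorname{supp}\psi_j$. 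Since a path is triangle-free, I would extract from this the following. At most one $i_0\in K$ can have $\psi_{i_0}^1=0$; if such $i_0$ exists and $|K|\ge 2$ then some other clique vector uses slot $1$, and since $2$ is the only $E'$-neighbour of $1$ other than $1$ itself, one gets $\operatorname{supp}\psi_{i_0}=\{2\}$; consequently $\psi_k^1\ne 0$ for every $k\in K$ with $k\ne i_0$. The only remaining possibility --- that no clique vector uses slot $1$ --- forces $|K|\le 2$ by triangle-freeness of the path on $\{0,2,3,\ldots,r\}$, and such small cliques are handled directly.

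It remains to manufacture a clique witness of size $|K|$ inside $\mathcal{G}$. If every $\psi_k^1\ne 0$, then $\{\psi_k^1:k\in K\}$ works: each $\psi_k^1$ is nonzero, and for $i\ne j$ the operator $|\psi_i^1\rangle\langle\psi_j^1|$ is the $(1,1)$-entry of $|\psi_i\rangle\langle\psi_j|\in S_{\mu_{r-1}(\mathcal{G})}$, hence lies in $S_{\mathcal{G}}$, and is nonzero. If the exceptional index $i_0$ occurs, then $\psi_{i_0}$ has all components zero except for a nonzero $\chi\in L^2(\mathcal{G})$ in slot $2$, and I would use $\chi$ in place of $\psi_{i_0}^1$: for $j\ne i_0$ the $(2,1)$-entry of $|\psi_{i_0}\rangle\langle\psi_j|\in S_{\mu_{r-1}(\mathcal{G})}$ is a nonzero multiple of $|\chi\rangle\langle\psi_j^1|$ and lies in $S_{\mathcal{G}}$, and symmetrically the $(1,2)$-entry of $|\psi_j\rangle\langle\psi_{i_0}|$ gives $|\psi_j^1\rangle\langle\chi|\in S_{\mathcal{G}}$; so $\{\chi\}\cup\{\psi_k^1:k\in K,\ k\ne i_0\}$ is a clique witness of size $|K|$ in $\mathcal{G}$. (This is exactly the step of Proposition~\ref{clique_thm1} in which the top-copy component of the exceptional vector is pushed back down into $\mathcal{G}$.) Either way $\omega(\mathcal{G})\ge|K|=\omega(\mu_{r-1}(\mathcal{G}))$, which with Remark~\ref{rem:embclique} gives the equality.

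The step I expect to be the main obstacle is the combinatorial analysis in the second paragraph. For $r=2$ there is a single new copy, but for general $r$ a clique of $\mu_{r-1}(\mathcal{G})$ could a priori run along the whole chain of copies $2,3,\ldots,r$ and through the apex $0$, and one must show that it cannot: it is confined, except for a single vector that descends from slot $2$ into $\mathcal{G}$, to the first copy. Triangle-freeness of the underlying path is precisely the input that forces this, and it is what replaces the classical fact that $\omega(\mu_{r-1}(G))=\omega(G)$; the bookkeeping needed to rule out the various small or degenerate support patterns is routine but must be carried out.
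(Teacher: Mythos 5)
Your argument follows essentially the same route as the paper's: identify $L^2(\mu_{r-1}(\mathcal{G}))$ with $\CC\oplus\bigoplus_{k=1}^r L^2(\mathcal{G})$, read off from \eqref{eq:amu} that elements of $S_{\mu_{r-1}(\mathcal{G})}$ are supported on the block positions $E'$, deduce that each slot $q\neq 1$ carries at most one clique vector, and descend the clique to the first copy of $\mathcal{G}$. Your combinatorial endgame (triangle-freeness of the path, and pushing the exceptional vector's slot-$2$ component into $\mathcal{G}$ via the $(2,1)$- and $(1,2)$-blocks) is in fact spelled out more completely than in the paper, which compresses this step into a single sentence; the only caveat, shared with the paper, is the degenerate case $\omega(\mathcal{G})=1$ (i.e.\ $A_{\mathcal{G}}=0$), where a size-$2$ clique of $\mu_{r-1}(\mathcal{G})$ can sit on the apex--top-copy blocks and the stated equality actually fails, so an implicit hypothesis that $\mathcal{G}$ has an edge is needed in both treatments.
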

\begin{proof}
Suppose that  the clique number $\omega(\mu_{r-1}(\mathcal{G}))=|K|$ where $K$ is a finite set analogous to the one in the proof of Proposition~\ref{clique_thm1}. 

Let $i\neq j$. In a completely similar manner as before, there exists $X^{(ij)}\in B(L^2(\mu_{r-1}(\mathcal{G})))$ such that    
\begin{equation}
\label{eq:mu_psi_psi}
    \begin{split}
        |\psi_i\rangle\langle \psi_j| =\frac{1+r\delta^2}{\delta^2}&\Biggl\{ \iota_0 m_\bullet \left(\eta_{\mathcal{G}}^\ast \otimes X_{0r}^{(ij)}\right)m_{\mathcal{G}}^\ast \iota_r^\ast  + \iota_r m_{\mathcal{G}}\left(\eta_{\mathcal{G}}\otimes X_{r0}^{(ij)}\right)m_\bullet^\ast \iota_0^\ast +\frac{1}{\delta^2}\iota_1 m_{\mathcal{G}}(A_{\mathcal{G}}\otimes X^{(ij)}_{11})m^\ast_{\mathcal{G}}\iota_1^ \ast
        \\
        & + \frac{1}{\delta^2} \sum\limits_{k=1}^{r-1}\left[\iota_k m_{\mathcal{G}}\left(A_{\mathcal{G}}\otimes X_{k\, k+1}^{(ij)}\right)m^\ast_{\mathcal{G}}\iota_{k+1}^\ast + \iota_{k+1} m_{\mathcal{G}}\left(A_{\mathcal{G}}\otimes X^{(ij)}_{k+1\, k}\right)m^\ast_{\mathcal{G}}\iota_k^\ast\right]
        \Biggr\}.
    \end{split}
\end{equation}
Comparing the above expression with $|\psi_i\rangle\langle \psi_j| =\sum\limits_{l,k=0}^{r} \iota_l |\psi_i^l\rangle \langle \psi_j^k|\iota_k^\ast$, in particular by looking at the entry $\iota_r |\psi_i^r\rangle \langle \psi_j^r|\iota_r^\ast$, we conclude that for every $i\neq j$ either $\psi_i^r=0$ or $\psi_j^r =0 $. If there exists $i_0$ such that $\psi^r_{i_0}\neq 0$, then $\psi^r_{j}=0$ for every $j\neq i_0$. For  $i \neq j$ such that $i,j\neq i_0$ we then have
\begin{equation}
    |\psi_i\rangle \langle \psi_j| =\iota_1 |\psi_i^1 \rangle \langle \psi_j^1|\iota_1^\ast + \sum\limits_{k=1}^{r-1}\left(\iota_k |\psi_i^k\rangle \langle \psi_j^{k+1} | \iota_{k+1}^\ast+\iota_{k+1} |\psi_i^{k+1}\rangle \langle \psi_j^{k} | \iota_{k}^\ast\right).
\end{equation}
For $r> 2$, the second term does not vanish automatically. However, looking now at the entry $\iota_{r-1} |\psi_i^r\rangle \langle \psi_j^r|\iota_{r-1}^\ast$ we conclude that for all $i\neq j$ either $\psi^{r-1}_{i}=0$ or $\psi^{r-1}_{j}=0$. Recursively, we show that for all $q=2,\ldots,r$ either $\psi_i^q=0$ or $\psi_j^q=0$. In any of these choices, we can construct a witness of a clique of size $|K|$ in $\mathcal{G}$.
\end{proof}
}

The concept of a complete graph admits a quantum version with a quantum space playing a role of a space of vertices and thus the clique number of a given (quantum graph) admits a quantum version that measures the maximal  size of a complete quantum subgraph.  

\begin{definition}
Given a finite quantum space $\mathbb{G}$ and a $\delta$-form $\psi:\C(\mathbb{G})\to\mathbb{C}$ we define  
a quantum  graph     $(\GG,\psi,A)$ where $A:L^2(\GG,\psi)\to L^2(\GG,\psi)$ is given by 
\[Ax = \delta^2\mathbb{I}\psi(x)-x \] where $\mathbb{I}$ denotes the matrix with all entries equal to $1$.  This quantum graph is denoted $\mathcal{K}_{\GG,\psi}$ and referred to as a complete quantum $(\GG,\psi)$-graph.  
\end{definition}
\begin{remark}
 We shall often skip $\psi$ and instead   writing  $\mathcal{K}_{\GG,\psi}$ we use  $\mathcal{K}_{\GG}$  referring to it as  a complete quantum $ \GG $-graph.
\end{remark}

\begin{definition}
    The quantum clique number for a quantum graph $\mathcal{G}$ is given by 
    \begin{equation}
    \omega_q(\mathcal{G})=\max\{ |\mathcal{K}_{\mathbb{F}}|:\,  \mathcal{K}_{\mathbb{F}}\rightarrow\mathcal{G}\}.
\end{equation}
\end{definition}
{
\begin{remark}Analogously to the Remark \ref{rem:embclique} we see that if there  exists a morphism $\mathcal{G}\rightarrow \mathcal{F}$ then $\omega_q(\mathcal{G})\leq \omega_q(\mathcal{F})$. In particular, if $\mathcal{G}$ is a quantum subgraph of $\mathcal{F}$, then $\omega_q(\mathcal{G})\leq \omega_q(\mathcal{F})$ and hence $\omega_q(\mathcal{G})\leq \omega_q(\mu_{r-1}(\mathcal{G}))$ for every $r\geq 1$.
\end{remark} 
 
\begin{question}
    Is the opposite inequality also true? If not, what is the minimal counterexample? 
\end{question}
}

Next, for a quantum graph $\mathcal{G}$ and a positive semidefinite operator $\Lambda\in B(\mathsf{H})$ we define the operator {space} $S_{\mathcal{G}}\otimes \Lambda\subseteq B(L^2(\mathcal{G})) \otimes B(\mathsf{H})\subseteq B (L^2(\mathcal{G})\otimes \mathsf{H})$. Since $(S_{\mathcal{G}},\C(\mathcal{G}),B(L^2(\mathcal{G})))$ is a quantum graph, we know in particular that $\C(\mathcal{G})'S_{\mathcal{G}}\C(\mathcal{G})'\subseteq S_{\mathcal{G}}$. Defining  $\mathsf{A}=\C(\mathcal{G})\otimes B( \mathsf{H} )$ and noticing that $\mathsf{A}'(S_{\mathcal{G}}\otimes \Lambda)\mathsf{A}'\subseteq S_{\mathcal{G}}\otimes \Lambda$, (where we use $B(\mathsf{H})'=\mathbb{C}$) we conclude  that $(S_{\mathcal{G}}\otimes \Lambda, \mathsf{A}, B (L^2(\mathcal{G})\otimes \mathsf{H}))$ forms a quantum graph which we   denote  by $\mathcal{G}\otimes\Lambda$.

\begin{definition} (\cite[Definition~15]{Stahlke})
    We say that there exists a quantum homomorphism between quantum graphs $\mathcal{G}$ and $\mathcal{F}$, denoted $\mathcal{G}\xrightarrow{\ast}\mathcal{F}$, if there exists a positive semidefinite operator $\Lambda\in B(\mathsf{H})$ s.t. $\mathcal{G}\otimes\Lambda\rightarrow\mathcal{F}$. 
\end{definition}
\begin{remark}
For quantum  homomorphisms, one can define the corresponding clique number and the quantum clique number by simply replacing $\mathcal{G}\rightarrow\mathcal{F}$ by $\mathcal{G}\xrightarrow{\ast}\mathcal{F}$ in their definitions. We denote these parameters by $\omega_{\ast}(\mathcal{G})$ and $\omega_{q\ast}(\mathcal{G})$ respectively.
 In a complete analogy to clique numbers $\omega$ and $\omega_q$ we see that 
    if there exists a  quantum homomorphism $\mathcal{G}\xrightarrow{\ast} \mathcal{F}$   then both $\omega_\ast(\mathcal{G})\leq \omega_\ast(\mathcal{F})$ and $\omega_{q\ast}(\mathcal{G})\leq \omega_{q\ast}(\mathcal{F})$. In particular, if $\mathcal{G}$ is a quantum subgraph of $\mathcal{F}$, then $\omega_\ast(\mathcal{G})\leq \omega_\ast(\mathcal{F})$ and $\omega_{q\ast}(\mathcal{G})\leq \omega_{q\ast}(\mathcal{F})$ and hence $\omega_\ast(\mathcal{G})\leq\omega_\ast(\mu_{r-1}(\mathcal{G}))$ and $\omega_{q\ast}(\mathcal{G})\leq \omega_{q\ast}(\mu_{r-1}(\mathcal{G}))$ for every $r\geq 1$.
 \end{remark}

\section{Outlook and open questions}
\label{sec:open}

We have defined the Mycielski transformation (and its generalized versions) for quantum graphs and demonstrated how it affects their certain parameters, in particular (quantum) clique numbers as well as (quantum) chromatic numbers. In contrast to the classical counterpart,  we were not able to prove that this transformation  automatically enlarges the chromatic number by one. Though we were not able to explicitly construct an example that violates the aforementioned equality, we believe that in general, this equality  is not true.  A similar lack of equality is expected for quantum chromatic numbers.

To a classical graph $G$, one can also associate the Lovász number $\overline{\vartheta}(G)$ \cite{Lovasz79}, which satisfies the monotonicity condition, i.e. the existence of a graph homomorphism $G\rightarrow F$ implies $\overline{\vartheta}(G)\leq \overline{\vartheta}(F)$ \cite[Section~4]{Silva13}. Moreover, $\omega(G)\leq \overline{\vartheta}(G)\leq \chi_{\mathrm{loc}}(G)$. The generalization of the Lovász number into the framework of quantum graphs was proposed in \cite{Duan_2013} (see also \cite[Definition~6]{Stahlke}). By \cite[Theorem~19]{Stahlke}, if for two irreflexive quantum graphs $\mathcal{G}$ and $\mathcal{F}$ we have either $\mathcal{G}\rightarrow \mathcal{F}$ or $\mathcal{G}\xrightarrow{\ast}\mathcal{F}$, then $\overline{\vartheta}(\mathcal{G})\leq \overline{\vartheta}(\mathcal{F})$, so that $\overline{\vartheta}(\mathcal{G})\leq \overline{\vartheta}(\mu_{r-1} (\mathcal{G}))$.

Yet another aspect that we aim to investigate in the forthcoming publication is to study how the proposed Mycielski transformation for quantum graphs affects their (quantum) groups of  symmetries. In addition to the aforementioned open questions, we formulate below further potentially intriguing problems motivated by the known results in classical graph theory.

\subsection{Quantum versions of Motzkin-Straus clique number}
The clique number of a classical graph $G=(V,E)$ can be also computed using the Motzkin-Straus characterization \cite{Motzkin},
\begin{equation}
    1-\frac{1}{\omega(G)}=\max\Bigl\{\langle v, A_G v\rangle: \ v\in \mathbb{R}_+^{|V|}, \quad \sum\limits_{i=1}^{|V|}v_i=1\Bigr\}.
\end{equation}

We now mimic this characterization in the quantum setting. For a given convex closed cone in $\mathcal{S}\subseteq L^2(\mathcal{G})$ we can define the following clique numbers.
\begin{definition}
    The Motzkin-Straus clique number $\omega_{\mathcal{S}}(\mathcal{G})$ for quantum graph $\mathcal{G}$ and convex closed cone $\mathcal{S}\subseteq L^2(\mathcal{G})$ is defined through
    \begin{equation}
        1-\frac{1}{\omega_{\mathcal{S}}(\mathcal{G})}=\max_{v\in \mathcal{S}}\langle v,A_{\mathcal{G}}v\rangle.
    \end{equation}
\end{definition}
\begin{question}
    Characterize (if exist) cones that correspond to the clique numbers for quantum graphs defined in Section~\ref{sec:clique}. Which of the Motzkin-Straus clique numbers are preserved by the Mycielski transformation? 
\end{question}

\subsection{Quantum version of Stiebitz theorem}
For a given classical graph $G$, $n\geq 1$ and $r_j\geq 1$ for $j=1,\ldots, n$, we define 
\begin{equation}
    \mu_{\{r_1,\ldots, r_{n}\}}(G)=\mu_{r_n-1}\left(\ldots \mu_{r_2-1}\left(\mu_{r_1-1}(G)\right)\ldots\right).
\end{equation}
For $n=0$, however, we identify $\{r_1,\ldots,r_n\}$ with $\emptyset$ and put $\mu_\emptyset(G)=G$. For $k\geq 2$ we then define
\begin{equation}
    \mathcal{M}_k=\left\{\mu_{\{r_1,\ldots, r_{k-2}\}}(K_2) \,|\, r_j\geq 1, \, j=1,\ldots,k-2\right\},
\end{equation}
i.e. it is the set of all generalized Mycielski transformations of $K_2$ obtained from $k-2$ consecutive applications of $\mu_{r-1}(\cdot)$ with possibly different $r$s in every iteration, and the following holds:
\begin{theorem} (\cite{Stiebitz}) For every $G\in\mathcal{M}_k$ we have $\chi_{\text{loc}}(G)\geq k$.
\end{theorem}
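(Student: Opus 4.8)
The plan is to prove this by the topological method of Lov\'asz rather than from the estimates of Section~\ref{sec:chromatic}. Note first that Proposition~\ref{prop:ineq}, applied $k-2$ times starting from $K_2$ (which is irreflexive, with $\chi_{\text{loc}}(K_2)=2$), already yields the \emph{upper} bound $\chi_{\text{loc}}(G)\le k$ for every $G\in\mathcal{M}_k$; what is missing is the matching lower bound. A naive induction on $\chi_{\text{loc}}$ cannot deliver it: as recalled in the remark above, a single step $\mu_{r-1}$ need not raise the classical chromatic number, so one must carry along a finer, homotopy-theoretic invariant and only at the end compare it with $\chi_{\text{loc}}$.

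Recall that to a graph $G$ with no isolated vertices one attaches the neighbourhood complex $\mathcal{N}(G)$, whose simplices are the finite sets of vertices admitting a common neighbour, and that Lov\'asz's bound reads $\chi_{\text{loc}}(G)\ge\operatorname{conn}(\mathcal{N}(G))+3$, where $\operatorname{conn}$ denotes topological connectivity (so $\operatorname{conn}$ of a nonempty, disconnected space is $-1$). Equivalently one may use the box complex $\mathrm{B}(G)$, a free $\mathbb{Z}_2$-complex, with $\chi_{\text{loc}}(G)\ge\operatorname{coind}_{\mathbb{Z}_2}(\mathrm{B}(G))+2$. For the base case $k=2$ we have $\mathcal{M}_2=\{K_2\}$ and $\mathcal{N}(K_2)$ is a pair of isolated points, so $\operatorname{conn}(\mathcal{N}(K_2))=-1$ and $\chi_{\text{loc}}(K_2)\ge 2$.

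The induction is on $k-2$, the number of iterated Mycielskians, and the whole content is the following geometric lemma: for an arbitrary graph $H$ and any $r\ge 1$ the complex $\mathcal{N}(\mu_{r-1}(H))$ is homotopy equivalent to the suspension $\Sigma\mathcal{N}(H)$ --- equivalently, there is a $\mathbb{Z}_2$-map from the $\mathbb{Z}_2$-suspension of $\mathrm{B}(H)$ into $\mathrm{B}(\mu_{r-1}(H))$ --- and hence $\operatorname{conn}(\mathcal{N}(\mu_{r-1}(H)))\ge\operatorname{conn}(\mathcal{N}(H))+1$. Granting this, if $G=\mu_{\{r_1,\dots,r_{k-2}\}}(K_2)$ then $\operatorname{conn}(\mathcal{N}(G))\ge -1+(k-2)=k-3$, so Lov\'asz's bound gives $\chi_{\text{loc}}(G)\ge(k-3)+3=k$. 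Combined with the upper bound noted above, this in fact pins down $\chi_{\text{loc}}(G)=k$ for every $G\in\mathcal{M}_k$.

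The main obstacle is the suspension lemma, and in particular its uniformity in $r$. One reads off the simplicial structure of $\mathcal{N}(\mu_{r-1}(H))$ from the vertex set $V^0\sqcup\dots\sqcup V^{r-1}\sqcup\{\bullet\}$ and the edges of $\mu_{r-1}(H)$ (the $i$-th level is joined vertex-by-vertex to the $(i{\pm}1)$-st, the bottom level $V^0$ additionally carries a copy of $E(H)$, and $\bullet$ is joined to all of $V^{r-1}$), and then decomposes $\mathcal{N}(\mu_{r-1}(H))$, by a Mayer--Vietoris / nerve argument, into two contractible pieces --- one controlled by the bottom copy of $H$, the other by the chain of higher levels together with $\bullet$ --- whose intersection deformation retracts onto a copy of $\mathcal{N}(H)$; the delicate point is that the length of the chain $V^1,\dots,V^{r-1}$ is homotopically inert, so one obtains a \emph{single} suspension however large $r$ is. For $r\in\{1,2\}$ (the cone and the ordinary Mycielskian) this is classical, and the general $\mathbb{Z}_2$-equivariant statement is in essence Stiebitz's argument \cite{Stiebitz}; an alternative, purely combinatorial route replaces $\operatorname{conn}$ by a hands-on parity/winding invariant attached to proper colourings propagated up the levels, but this is only the combinatorial shadow of the same suspension phenomenon.
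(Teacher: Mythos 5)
The paper does not actually prove this statement: it is quoted, with a citation, from Stiebitz's work, as motivation for the quantum question that follows it. Your proposal reconstructs what is in essence the argument of that cited source, so you are on the same route as the (external) proof: base case $K_2$, a suspension lemma for the neighbourhood (or box) complex of a generalized Mycielskian, Lov\'asz's topological lower bound, and Proposition~\ref{prop:ineq} iterated $k-2$ times for the matching upper bound $\chi_{\text{loc}}(G)\le k$. The surrounding logic is sound: you are right that a naive induction on $\chi_{\text{loc}}$ cannot work (the paper itself records that $\mu_{r-1}$ with $r>2$ need not raise the chromatic number), that one must instead propagate a topological invariant, and that $\operatorname{conn}(\Sigma X)\ge\operatorname{conn}(X)+1$ converts the suspension lemma into $\operatorname{conn}(\mathcal{N}(G))\ge k-3$ and hence $\chi_{\text{loc}}(G)\ge k$; the resulting equality $\chi_{\text{loc}}(G)=k$ is indeed the full form of Stiebitz's theorem.

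The one substantive caveat is that the suspension lemma, which you correctly identify as the whole content, is asserted and attributed rather than proved, and it is genuinely delicate for $r\ge 3$: the two-piece decomposition you describe (the bottom level carrying a copy of $E(H)$ versus the chain of upper levels together with $\bullet$) must be arranged so that both pieces are contractible and their intersection deformation retracts onto a copy of $\mathcal{N}(H)$ \emph{uniformly in} $r$, and the clean statement --- that the box complex of $\mu_{r-1}(H)$ is $\mathbb{Z}_2$-homotopy equivalent to the suspension of that of $H$ for every $H$ and every $r\ge 1$ --- is exactly the nontrivial theorem of Stiebitz (in equivariant form, Csorba). So as a blind reconstruction of a theorem the paper only cites, your outline is the correct one and correctly locates the hard step; as a self-contained proof it delegates that hard step to the reference, precisely as the paper does.
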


Let $\mathcal{K}_n$ be the quantum complete graph on $\mathrm{Mat}_n$ equipped with the tracial $\delta$-form $\psi_{n}$, and define
\begin{equation}
    \mathbb{M}_k=\left\{\mu_{\{r_1,\ldots, r_{k-2}\}}(\mathcal{K}_2) \,|\, r_j\geq 1, \, j=1,\ldots,k-2\right\}.
\end{equation}
\begin{question}
    For which type of (quantum) chromatic numbers do we have $\chi_{\bullet}(\mathcal{G})\geq k$ for all $\mathcal{G}\in \mathbb{M}_k$. 
\end{question}

\section*{Acknowledgments}
{ The work of AB was partially funded by the Deutsche Forschungsgemeinschaft (DFG, German Research Foundation) under Germany’s Excellence Strategy – EXC-2111 – 390814868.  We thank David E. Roberson for his helpful comments.}
\bibliography{biblproj}{}
\bibliographystyle{plain}

\end{document}